\theoremstyle{plain}
\newtheorem{theorem}{Theorem}[section]
\newtheorem{lemma}[theorem]{Lemma}
\theoremstyle{definition}
\newtheorem{definition}[theorem]{Definition}
\newtheorem{remark}[theorem]{Remark}
\numberwithin{equation}{section}
\newcommand{\bx}{{\bf x}}
\newcommand{\bn}{{\bf n}}
\newcommand{\jump}[1]{[ #1 ]}
\newcommand{\avg}[1]{\{ #1\}}
\newcommand{\Th}{\mathcal{T}_h}
\newcommand{\Eh}{\mathcal{E}_h}
\newcommand{\Eho}{\mathcal{E}_h^o}
\newcommand{\Ehb}{\mathcal{E}_h^b}
\newcommand{\bne}{\bn_e}
\newcommand{\norm}[1]{\lVert #1\rVert}
\newcommand{\hnorm}[1]{\lVert #1\rVert_{h}}
\newcommand{\snorm}[1]{|#1|}
\newcommand{\wnorm}[1]{\lVert #1\rVert_{w}}
\newcommand{\trinorm}[1]{{\vert\kern-0.25ex\vert\kern-0.25ex\vert #1 \vert\kern-0.25ex\vert\kern-0.25ex\vert}}
\newcommand{\cE}{\mathcal{E}}
\newcommand{\cT}{\mathcal{T}}
\newcommand{\iO}{\int_{\Omega}}
\newcommand{\argmin}{\mathop{\rm argmin}}
\title{
A $C^0$ weak Galerkin method with preconditioning for constrained optimal control problems with general tracking\thanks{Submitted to a journal in 2025.}
}
\author{SeongHee Jeong,\thanks{Department of Mathematics, Florida State University, Tallahassee, FL 32306 (\texttt{sjeong@fsu.edu})}
\and Seulip Lee,\thanks{Department of Mathematics, Tufts University, Medford, MA 02155 (\texttt{seulip.lee@tufts.edu})}
\and Kening Wang,\thanks{Department of Mathematics and Statistics, University of North Florida, Jacksonville, FL 32224 (\texttt{kening.wang@unf.edu})}
}
\date{
}
\begin{document}
	\maketitle
	
	\begin{abstract}
        This paper presents a $C^0$ weak Galerkin ($C^0$-WG) method combined with an additive Schwarz preconditioner for solving optimal control problems (OCPs) governed by partial differential equations with general tracking cost functionals and pointwise state constraints. These problems pose significant analytical and numerical challenges due to the presence of fourth-order variational inequalities and the reduced regularity of solutions.  
        Our first contribution is the design of a $C^0$-WG method based on globally continuous quadratic Lagrange elements, enabling efficient elementwise stiffness matrix assembly and parameter-free implementation while maintaining accuracy, as supported by a rigorous error analysis. As a second contribution, we develop an additive Schwarz preconditioner tailored to the $C^0$-WG method to improve solver performance for the resulting ill-conditioned linear systems. Numerical experiments confirm the effectiveness and robustness of the proposed method and preconditioner for both biharmonic and optimal control problems.

		\vskip 10pt
		\noindent\textbf{Keywords:} Weak Galerkin; optimal control; general tracking; pointwise state constraints; fourth-order variational formulation; preconditioners

	\end{abstract}


\section{Introduction}
\label{sec:intro}

Optimal control problems (OCPs) governed by partial differential equations (PDEs) arise naturally in dynamical systems where the goal is to control spatial-temporal processes described by physical laws. 
These problems involve determining control inputs that optimize a performance criterion, such as minimizing energy usage or achieving a desired state distribution, while satisfying the underlying PDE constraints (see e.g.,~\cite{troltzsch2010optimal, meyer2006optimal, hintermuller2010pde}). 
For instance, in heat conduction, the control may represent boundary heating to achieve uniform temperature distribution; in diffusion processes, it may involve regulating sources to maintain concentration levels; and in incompressible fluid flow, optimal control can be used to minimize drag or match velocity profiles by manipulating boundary inflow or body forces. 

Among such OCPs, those involving \textit{a general tracking cost functional} and \textit{pointwise state constraints} are widely used in real-world applications, but often present significant analytical and numerical challenges.
General tracking refers to the objective of matching the system state to a desired target over various spatial features, such as 
points~\cite{allendes2022error, brett2016optimal, allendes2017adaptive}, curves, or subregions in the domain~\cite{brenner2024c0, jeong2025optimal}.
Tracking over specific features inherently reduces the regularity of the optimal state and the associated Lagrange multipliers, potentially resulting in measure-valued multipliers. Moreover, pointwise state constraints, which impose lower and upper bounds on the state variable almost everywhere in the domain, introduce additional complexity.
The state constraints also degrade the regularity of the optimal solution, making it more challenging to enforce constraints in a numerically stable way and to achieve optimal convergence orders.
Therefore, addressing these difficulties remains a central challenge in the analysis and simulation of the constrained OCPs.

To this end, recent studies~\cite{brenner2024c0, jeong2025optimal} propose reformulating the constrained OCPs by eliminating the control variable via the PDE constraint, leading to a reduced formulation with only the state variable.
When pointwise state constraints are applied, the reduced problem becomes a fourth-order variational inequality in the state variable. Once the optimal state is computed, the optimal control can be recovered using the PDE constraint.
For the numerical treatment of such fourth-order formulations, the $C^0$ interior penalty ($C^0$-IP) methods, originally introduced in~\cite{brenner2005c}, utilize globally continuous Lagrange elements with penalty terms to weakly enforce the continuity of first derivatives using a sufficiently large penalty parameter. Compared to conforming $C^1$ finite elements, the $C^0$-IP methods are appealing for fourth-order problems such as biharmonic problems, as they employ standard Lagrange polynomial spaces, minimizing the degrees of freedom. These methods were recently adopted in~\cite{brenner2024c0, jeong2025optimal} to solve the constrained OCPs.
However, the $C^0$-IP method in the symmetric case relies on careful tuning of penalty parameters to ensure stability and convergence, and it includes several trace terms that interfere with elementwise (or parallel) assembly of the stiffness matrix.

In this paper, we propose a weak Galerkin (WG) approach for the constrained OCPs using globally continuous quadratic Lagrange elements, aiming to enhance the efficiency of both numerical discretizations and linear solvers.
WG methods, based on weakly defined differential operators, offer several advantages across a variety of PDEs, including flexibility for complex geometries through the use of polygonal meshes, elementwise local formulations, and the ability to vary polynomial degrees across elements (see, e.g.,~\cite{wang2013weak,li2013weak,mu2013weakInterface,mu2015weak,mu2015weakMaxwell,wang2016weak}).
Especially for biharmonic problems, WG methods based on the weak Laplacian have been developed in~\cite{mu2014c, mu2014weak, ye2020stabilizer, zhu2023stabilizer}. A recent study~\cite{lee2024low} further demonstrated that a WG formulation using the weak gradient can significantly improve the efficiency of stiffness matrix assembly by replacing symmetric interior penalty formulations in the Stokes equations~\cite{yi2022enriched,hu2024pressure}.
Motivated by these advances, we adopt a WG framework to address the constrained OCPs governed by a fourth-order variational inequality and subject to pointwise state constraints, where efficient and robust biharmonic discretizations are essential. 

Our first contribution is the development of a $C^0$ weak Galerkin ($C^0$-WG) method that employs globally continuous quadratic Lagrange elements to simplify stiffness matrix assembly, eliminating the trace terms and the penalty parameter required in the symmetric $C^0$-IP method. The discrete weak Laplacian is locally computed as a piecewise constant function in each element, using function values at the barycenters of interfaces shared by two neighboring elements, along with basic geometric data such as lengths of edges and areas of elements in the two-dimensional setting (details will be provided in Section~\ref{sec: c0WG_simply_supported}).
In the $C^0$-WG method, the bilinear form is assembled from the $L^2$ inner product of the weak Laplacians and a parameter-free penalty (or stabilization) term.
As a result, the proposed method offers efficient matrix assembly and stable performance independent of tuning parameters.
Furthermore, we present a rigorous error analysis within the WG framework, establishing the well-posedness and accuracy of the method for the constrained OCPs by drawing a connection to the existing $C^0$-IP method~\cite{brenner2024c0}.

As a second contribution, we develop an additive Schwarz preconditioner tailored to the $C^0$-WG method to address the computational challenges posed by ill-conditioned systems arising from fourth-order PDEs. In particular, when the cost functional involves general tracking terms, the resulting system matrix often exhibits a large condition number, which can severely degrade solver performance. To mitigate this issue, we adopt a one-level additive Schwarz preconditioner within the framework of~\cite{brenner2022additive}. This domain decomposition technique partitions the global problem into overlapping subdomain problems, each solved independently, and combines their solutions additively to form a global approximation. This approach is particularly effective for parallel computing and has been successfully applied in large-scale simulations~\cite{brenner2020Balancing, park2024Additive}. For the proposed WG formulation, the preconditioner significantly improves the boundedness of the condition number, enabling efficient and scalable iterative solvers for the constrained OCPs.

The $C^0$-WG method and preconditioner are developed for both two- and three-dimensional settings. For the constrained OCPs, the discussion focuses on two dimensions due to the complexity introduced by the general tracking terms; however, the approach can be extended to three dimensions without significant difficulty. Accordingly, the numerical experiments are also presented in two-dimensional settings.

The remainder of this paper is organized as follows.
Section~\ref{sec:prelim} introduces key notation and definitions, presents the constrained OCPs, and reviews existing approaches.
In Section~\ref{sec: c0WG_simply_supported}, we recall the definition of the weak Laplacian and introduce a WG discretization for the constrained OCPs.
Section~\ref{sec: convergence_analysis} provides a convergence analysis for the $C^0$-WG method applied to the OCPs.
An additive Schwarz preconditioner for the resulting discrete system is discussed in Section~\ref{sec: preconditioning}.
Finally, Section~\ref{sec: numerical_experiment} presents numerical experiments for the proposed numerical method applied to both biharmonic problems and the constrained OCPs, including a comparison of condition numbers with and without the proposed preconditioner. We summarize our contributions and discuss related research directions in Section~\ref{sec:conclusion}.


\section{Preliminaries}
\label{sec:prelim}

In this section, we introduce key notation and definitions, present the optimal control problems with general tracking and state constraints, and review existing numerical approaches.


\subsection{Notation and Definitions}

We introduce the notation and definitions used throughout this paper.
Let $\mathcal{D} \subset \mathbb{R}^d$ be a bounded Lipschitz domain, where $d=2$ or 3. For any real number $s \geq 0$, we denote the Sobolev space on $\mathcal{D}$ by $H^s(\mathcal{D})$, with associated norm $\|\cdot\|_{s,\mathcal{D}}$ and seminorm $|\cdot|_{s,\mathcal{D}}$. In particular, $H^0(\mathcal{D}) = L^2(\mathcal{D})$, and the $L^2$ inner product is denoted by $(\cdot,\cdot)_{\mathcal{D}}$. When $\mathcal{D} = \Omega$, where $\Omega$ is the domain associated with the optimal control problems (see Section~\ref{subsec: OCP}), the subscript $\mathcal{D}$ will be omitted for brevity. This notation naturally extends to vector- and tensor-valued Sobolev spaces.
We denote by $H_0^1(\mathcal{D})$ the space of functions in $H^1(\mathcal{D})$ with vanishing trace on $\partial\mathcal{D}$, and by $H^2_0(\mathcal{D})$ the space of functions in $H^2(\mathcal{D})$ with vanishing trace and vanishing normal flux on $\partial \mathcal{D}$. 
The space of polynomials of degree less than or equal to $k$ on $\mathcal{D}$ is denoted by $P_k(\mathcal{D})$.
We also define the Hilbert space
\begin{equation*}
    H(\mathrm{div}, \mathcal{D}) := \left\{ \mathbf{v} \in [L^2(\mathcal{D})]^2 \,:\, \mathrm{div}\,\mathbf{v} \in L^2(\mathcal{D}) \right\}
\end{equation*}
equipped with the norm
\begin{equation*}
    \|\mathbf{v}\|_{H(\mathrm{div},\mathcal{D})}^2 := \|\mathbf{v}\|_{0,\mathcal{D}}^2 + \|\mathrm{div}\,\mathbf{v}\|_{0,\mathcal{D}}^2.
\end{equation*}

For the discrete setting, let $\mathcal{T}_h$ be a shape-regular triangulation of $\Omega$, where each element $T \in \mathcal{T}_h$ is either a triangle or a tetrahedron. Let $\mathcal{E}_h$ denote the set of all edges or faces, which is partitioned into interior edges or faces $\mathcal{E}_h^o$ and boundary edges or faces $\mathcal{E}_h^b$.
For any element $T \in \mathcal{T}_h$, we denote its diameter by $h_T$ and its outward unit normal vector on $\partial T$ by $\mathbf{n}_T$. For each interior interface $e \in \mathcal{E}_h^o$ shared by neighboring elements $T^+$ and $T^-$, we define $\mathbf{n}_e$ to be the unit normal vector pointing from $T^-$ to $T^+$. For a boundary edge or face $e \in \mathcal{E}_h^b$, $\mathbf{n}_e$ denotes the outward unit normal vector on $\partial\Omega$.

The broken Sobolev space associated with $\mathcal{T}_h$ is defined as
\begin{equation*}
    H^s(\mathcal{T}_h) := \left\{ v \in L^2(\Omega) \,:\, v|_T \in H^s(T),\ \forall T \in \mathcal{T}_h \right\},
\end{equation*}
with the norm
\begin{equation*}
    \|v\|_{s,\mathcal{T}_h} := \left( \sum_{T \in \mathcal{T}_h} \|v\|_{s,T}^2 \right)^{1/2}.
\end{equation*}
For $s = 0$, the $L^2$ inner product over $\mathcal{T}_h$ is denoted by $(\cdot,\cdot)_{\mathcal{T}_h}$. Similarly, the $L^2$ inner product and norm on $\mathcal{E}_h$ are denoted by $\langle\cdot,\cdot\rangle_{\mathcal{E}_h}$ and
\begin{equation*}
    \|v\|_{0,\mathcal{E}_h} := \left( \sum_{e \in \mathcal{E}_h} \|v\|_{0,e}^2 \right)^{1/2}.
\end{equation*}
The piecewise polynomial space over $\mathcal{T}_h$ is defined as
\begin{equation*}
    P_k(\mathcal{T}_h) := \left\{ v \in L^2(\Omega) \,:\, v|_T \in P_k(T),\ \forall T \in \mathcal{T}_h \right\}.
\end{equation*}

For any function $v$, the jump and average across $e \in \mathcal{E}_h$ are defined as
\begin{equation*}
    \jump{v} = \begin{cases}
        v^+ - v^- & \text{on } e \in \mathcal{E}_h^o, \\
        v & \text{on } e \in \mathcal{E}_h^b,
    \end{cases}
    \qquad
    \avg{v} = \begin{cases}
        (v^+ + v^-)/2 & \text{on } e \in \mathcal{E}_h^o, \\
        v & \text{on } e \in \mathcal{E}_h^b,
    \end{cases}
\end{equation*}
where $v^\pm$ denotes the trace of $v|_{T^\pm}$ on $e = \partial T^+ \cap \partial T^-$.
These definitions extend naturally to vector- and tensor-valued functions.

We also recall the following identity and trace inequality used in this paper. For any vector-valued function $\mathbf{v}$ and scalar function $q$, we have
\begin{equation}\label{eqn: jump-avg}
    \sum_{T \in \mathcal{T}_h} \langle \mathbf{v} \cdot \mathbf{n}_T, q \rangle_{\partial T}
    = \langle \jump{\mathbf{v}} \cdot \mathbf{n}_e, \avg{q} \rangle_{\mathcal{E}_h} + \langle \avg{\mathbf{v}} \cdot \mathbf{n}_e, \jump{q} \rangle_{\mathcal{E}_h^o}.
\end{equation}
Moreover, for any $v \in H^1(T)$ and $e \subset \partial T$, the following trace inequality holds:
\begin{equation}\label{eqn: trace}
    \|v\|_{0,e}^2 \leq C\left( h_T^{-1} \|v\|_{0,T}^2 + h_T \|\nabla v\|_{0,T}^2 \right),
\end{equation}
where $C > 0$ is a constant depending only on the shape regularity of the mesh.


\subsection{Optimal Control Problems}\label{subsec: OCP}

Let $\Omega\subset\mathbb{R}^2$ be a convex, bounded, and connected Lipschitz domain with boundary $\partial \Omega$. 
The optimal control problems (OCPs) considered in this work aim to find
\begin{equation}\label{ocp}
\left(\bar{y},\bar{u}\right)=\argmin_{(y,u)\in \mathbb{U}} \frac1{2}\left[\int_\Omega |y-y_d|^2 d\chi 
    + \beta\int_\Omega |u|^2 d\bx\right],
\end{equation}
where $\mathbb{U}\subset H^1_0(\Omega)\times L^2(\Omega)$, $y_d$ is a desired state (to be defined in~\eqref{eqn: defi_y_bar}), and $\beta>0$. The optimization is subject to the elliptic PDE:
\begin{subequations}
\begin{alignat}{2}
-\Delta y = u\quad&\text{in }\Omega, \label{state_control}\\
y=0\quad&\text{on }\partial\Omega,\label{state_bd}
\end{alignat}
\end{subequations}
and the pointwise state constraints:
\begin{equation}
    \psi_-(\bx)\leq y(\bx)\leq \psi_+(\bx)
\quad\text{a.e. }\;\bx=(x,y)\in \Omega.
\label{eqn:point_constraints}
\end{equation}
In \eqref{ocp}, we recall the definition of the Radon measure $\chi$ on $\bar\Omega$ in~\cite{brenner2024c0, jeong2025optimal}:
\begin{equation}\label{radon}
    \iO f\,d\chi = \sum_{j=1}^J f(\mathscr{P}_j)w^j_\mathscr{P}
    +\sum_{l=1}^L\int_{\mathscr{C}_l}f w^l_\mathscr{C}\,ds
    +\sum_{m=1}^M \int_{\mathscr{E}_m} fw^m_\mathscr{E}\,d\bx,
\end{equation}
where:

\begin{itemize}
    \item $\mathscr{P}=\{\mathscr{P}_1,\ldots,\mathscr{P}_J\}$ is a finite set of points in $\Omega$,
    \item $\mathscr{C}=\{\mathscr{C}_1,\ldots,\mathscr{C}_L\}$ 
    is a collection of curves where $\mathscr{C}_l\subset\Omega$,
    \item $\mathscr{E} = \{  \mathscr{E}_1, \ldots, \mathscr{E}_M \}$ is a collection of subdomains $\mathscr{E}_m \subset \Omega$.
\end{itemize}
The weight functions $w_\mathscr{P}^j$, $w_\mathscr{C}^l$ and $w^m_\mathscr{E}$ are bounded, nonnegative Borel-measurable functions defined on $\mathscr{P}$, $\mathscr{C}$ and $\mathscr{E}$, respectively. 
The desired state $y_d$ is a target function for the state function $y$, defined by
\begin{equation}y_d := \left\{
    \begin{array}{cll}
         y_\mathscr{P}&\text{on }\mathscr{P},  \\
        y_\mathscr{C}&\text{on }\mathscr{C}\setminus\mathscr{P},\\
        y_\mathscr{E}&\text{on }\mathscr{E}\setminus
         (\mathscr{C}\cup\mathscr{P}),
    \end{array}\right.\label{eqn: defi_y_bar}
\end{equation}
such that 
\[\|y_d\|^2_{L^2(\Omega;\chi)}:=\iO |y_d|^2 d\chi<\infty.\]
We also assume that the functions $\psi_\pm$ in \eqref{eqn:point_constraints} satisfy the following conditions~\cite{brenner2024c0}:
\begin{subequations}
\begin{alignat}{2}
&\psi_\pm\in W^{3,q}(\Omega)&&\text{\quad for }q>2,\label{state1}\\
&\psi_-<\psi_+&&\text{\quad on }\bar\Omega,\label{state2}\\
&\psi_-<0<\psi_+&&\text{\quad on }\partial\Omega.\label{state3}
\end{alignat}
\end{subequations}

In this work, instead of solving simultaneously for both the state $y$ and the control $u$, we reformulate the OCPs~\eqref{ocp} by substituting the elliptic PDE~\eqref{state_control} directly into the cost functional.
\begin{algorithm}[H]
\caption*{\textbf{Reduced Optimal Control Problems}}\label{alg: reduced_ocp}
Find $\bar{y}\in U$ such that
\begin{equation}\label{reduced_cost}
   \bar{y} = \argmin_{y\in U} \frac1{2}\left[\iO |y-y_d|^2 d\chi 
    + \beta\iO |\Delta y|^2 d\bx\right],
\end{equation}
where the admissible set is given by
\begin{equation}\label{admissible}
    U = \{ y\in H^2(\Omega)\cap H^1_0(\Omega)\,:\,
    \psi_-\leq y\leq \psi_+
    \text{ in }\Omega\}.
\end{equation}
Note that, according to (\ref{state2})-(\ref{state3}),
the admissible set $U$ is nonempty.
\end{algorithm}

\begin{remark}
    As a result of the convexity of $\Omega$, the constraint~\eqref{state_control} implies
    that $y\in H^2(\Omega)$ for any 
    $(y,u)\in \mathbb{U}$, by elliptic regularity.
    Therefore, the cost functional in~\eqref{ocp}
    is well-defined by the Sobolev embedding theorem~\cite{adams2003sobolev} ($H^2(\Omega)\subset C(\bar\Omega)$). 
\end{remark}

The reduced OCPs~\eqref{reduced_cost} admits a unique solution $\bar{y}\in U$, which is characterized by the following fourth-order variational inequality~\cite{kinderlehrer2000introduction}:
\begin{equation}\label{vi}
\beta\iO (\Delta\bar{y})(\Delta(y-\bar{y}))\,d\bx
+\iO(\bar{y}-y_d)(y-\bar{y})\,d\chi
\geq 0,\quad \forall y\in U.
\end{equation}
This variational inequality is equivalent to the following generalized Karush-Kuhn-Tucker (KKT) conditions~\cite{ito2008lagrange,rudin1987real}:
    \begin{equation}\label{kkt}
    \beta\iO(\Delta\bar{y})(\Delta z)\,d\bx
    +\iO(\bar{y}-y_d)z\,d\chi=
    \iO z\,d\xi,
    \quad\forall z\in H^2(\Omega)\cap H^1_0(\Omega),
    \end{equation}
    where $\xi$ is a bounded regular Borel measure satisfying the conditions
\begin{subequations}\label{lagrange}
\begin{alignat}{2}
        \xi \geq 0 &\quad\text{if } \bar{y}=\psi_-,\label{lagrange1}\\
        \xi \leq 0 &\quad\text{if } \bar{y}=\psi_+,\label{lagrange2}\\
        \xi =0 &\quad\text{otherwise}.\label{lagrange3}
\end{alignat}
\end{subequations}
We refer the reader to~\cite{jeong2023} for a detailed derivation of the KKT conditions.

We conclude this section by presenting the regularity results for the optimal state $\bar{y}$ of the reduced problem~\eqref{reduced_cost}.
It is known~\cite{agmon1959estimates, hormander1985analysis, brenner2024c0} that the solution satisfies the local regularity:
$$\bar{y}\in W^{3,s}_{\text{loc}}(\Omega),\quad \forall s\in (1,2).$$
Furthermore, by global regularity theory~\cite{grisvard2011elliptic, dauge1988elliptic, brenner2024c0}, we have  
\begin{equation}
    \bar{y}\in H^{2+\alpha}(\Omega),
\end{equation}
for some $\alpha\in(0,1)$, where $\alpha$ depends on the geometric regularity of $\Omega$. In the case where $\Omega$ is a rectangle, it is known
that $\alpha=1-\epsilon$ for any $\epsilon>0$; see~\cite{dauge1988elliptic, casas19852} for more details.


\subsection{Existing Approaches to the Optimal Control Problems}\label{subsec: C0IPM}

We begin by introducing the bilinear form
(see, e.g.,~\cite{grisvard2011elliptic}):
\begin{equation}\label{biharmonicbi}
    a(v,w) =\iO (\Delta v)(\Delta w)\,d\bx= \iO D^2v:D^2w\,d\bx,
    \quad \forall v,w\in H^2(\Omega)\cap H^1_0(\Omega),
\end{equation}
where $D^2v$ denotes the Hessian matrix of $v$, and $D^2v : D^2w$ represents the Frobenius inner product of $D^2v$ and $D^2w$.

Let $V_h\subset H^1_0(\Omega)$ be the Lagrange finite element space of degree $k\geq 2$ associated with
$\cT_h$, consisting of globally continuous, piecewise polynomials from $P_k(\Th)$.
The discrete bilinear form in the $C^0$ interior penalty ($C^0$-IP) method \cite{brenner2005c} is defined for all $v,w\in V_h$ as
\begin{align}
    a_h(v,w) &= (D^2 v,D^2 w)_{\mathcal{T}_h} + \langle \avg{\nabla(\nabla v\cdot \bn_e)}\cdot\bn_e,\jump{\nabla w}\cdot\bn_e\rangle_{\Eho}\nonumber\\
    &\qquad\qquad+ \langle \avg{ \nabla(\nabla w\cdot\bn_e)}\cdot\bn_e,\jump{\nabla v}\cdot\bn_e\rangle_{\Eho} +  \rho\langle h_e^{-1}\jump{\nabla v}\cdot\bn_e,\jump{\nabla w}\cdot\bn_e\rangle_{\Eho}.\label{eqn: C0IP_bilinear_form}
\end{align} 
Here, $\rho > 0$ is a penalty parameter, and $h_e=|e|^{1/(d-1)}$, where $|e|$ denotes the measure of the interfaces $e \in \Eho$.

In the discrete bilinear form, although the finite element functions in $V_h$ are globally continuous, their normal derivatives are discontinuous across interfaces $e\in\Eho$ because the functions do not belong to $H^2(\Omega)$. This discontinuity in the normal derivatives gives rise to the jump and average terms across interfaces in the discrete bilinear form.  
Moreover, thanks to the symmetric structure of the discrete bilinear form and the inclusion of a penalty term, the bilinear form $a_h(\cdot,\cdot)$ becomes symmetric positive-definite (SPD) with a sufficiently large penalty parameter $\rho$, ensuring that the discrete problem preserves the SPD property of the continuous bilinear form $a(\cdot,\cdot)$ in \eqref{biharmonicbi}.

Using the $C^0$-IP method, we discretize the reduced OCPs~\eqref{reduced_cost} (see~\cite{brenner2024c0, jeong2025optimal} for futher details), formulated as
\begin{equation}
    \bar{y}_h = \argmin_{y_h\in U_h}\frac1{2}\left[
    \beta a_h(y_h,y_h) + \|y_h-y_d\|^2_{L^2(\Omega;\chi)}\right],
\end{equation}
where $a_h(\cdot,\cdot)$ is defined in~\eqref{eqn: C0IP_bilinear_form}, and $U_h$ is a discrete admissible set corresponding to $U$ in~\eqref{admissible} (details will be provided in Section~\ref{subsec: discretize_OCPs}).

\begin{remark}
    The $C^0$-IP method offers a significant advantage over $C^1$-conforming and other higher-order finite element methods by requiring a minimal number of degrees of freedom.  
Whereas $C^1$-conforming elements necessitate complex shape functions and a substantial increase in degrees of freedom to achieve higher continuity, the $C^0$-IP method overcomes these challenges by employing standard Lagrange elements and enforcing higher continuity weakly through interior penalty terms.  
As a result, it enables a much simpler implementation and more efficient use of computational resources, particularly for fourth-order problems.
\end{remark}

\begin{remark}
    The $C^0$-IP method~\cite{brenner2005c} for the biharmonic problem in $H^2(\Omega)\cap H_0^1(\Omega)$,
\begin{subequations}\label{sys:governing}
\begin{alignat}{2}
\Delta^2 u &= f \quad\text{in }\Omega, \label{mp1}\\
u=\Delta u & = 0\quad\text{on }\partial\Omega,\label{mp3}
\end{alignat}
\end{subequations}
seeks $u_h \in V_h$ such that
    \begin{equation}\label{eqn: C0IP_biharmonic}
    a_h(u_h,v_h)=(f,v_h),\quad \forall v_h\in V_h.
\end{equation}
With the mesh-dependent norm
\begin{equation}\label{eqn: defi_h_norm}
    \hnorm{v}^2 = (D^2v,D^2v)_{\Th} + \langle h_e^{-1}\jump{\nabla v}\cdot\bne,\jump{\nabla v}\cdot\bne\rangle_{\Eho},
\end{equation}
there exist positive constants $\eta_*$ and $\eta^*$ such that
\begin{align}
    &a_h(v,v)\geq \eta_*\hnorm{v}^2,\label{eqn: hcoera}\\
    &a_h(v,w)\leq \eta^*\hnorm{v}\hnorm{w}.\label{eqn: hcontia}
\end{align}
Therefore, the discrete problem~\eqref{eqn: C0IP_biharmonic} is well-posed; see~\cite{brenner2005c} for further details.
\end{remark}


\section{A $C^0$ Weak Galerkin Method for the Optimal Control Problems}\label{sec: c0WG_simply_supported}

In this section, we propose a $C^0$ weak Galerkin ($C^0$-WG) finite element method for the optimal control problems described in \eqref{reduced_cost}, using globally continuous quadratic Lagrange elements together with the weak Laplacian~\cite{mu2014weak, mu2014c}.
In contrast to the WG methods in~\cite{mu2014weak, mu2014c}, which are formulated for the space $H_0^2(\Omega)$, our method addresses biharmonic and optimal control problems in the space $H^2(\Omega)\cap H_0^1(\Omega)$.
When applied to biharmonic problems using the same quadratic Lagrange elements, the proposed $C^0$-WG method results in a bilinear form that is simpler than that of the $C^0$-IP method~\eqref{eqn: C0IP_bilinear_form}.

\subsection{Weak Laplacian}

We introduce the space of weak functions~\cite{mu2014weak,mu2014c} on each element $T\in\mathcal{T}_h$:
\begin{equation}
    \mathcal{W}(T)= \left\{
    \nu=\left\{\nu_0, \nu_b, \bm{\nu}_g\right\}\;: \;
    \nu_0\in L^2(T),
    \ \nu_b\in H^{\frac1{2}}(\partial T),
    \ \bm{\nu}_g\cdot \bn_T\in H^{-\frac1{2}}(\partial T)
    \right\}
\end{equation}
Each component of $\nu$ corresponds to a term arising from the integration by parts formula for the Laplacian:
\begin{eqnarray*}
    (\Delta v,\varphi)_T
    = (v,\Delta \varphi)_T
    -\langle v, \nabla\varphi\cdot\bn_T\rangle_{\partial T}
    +\langle \nabla v\cdot\bn_T,\varphi\rangle_{\partial T}.
\end{eqnarray*}
More precisely, the correspondences are given by
\begin{equation*}
    v|_T\ \leftrightarrow\ \nu_0,\quad v|_{\partial T}\ \leftrightarrow\  \nu_b,\quad\text{and}\quad\nabla v|_{\partial T}\ \leftrightarrow\ \bm{\nu}_g.
\end{equation*}
We also introduce the test function space
\begin{equation*}
    \mathcal{G}_2(T) = \left\{
    \varphi\; : \; \varphi \in H^1(T),
    \ \Delta\varphi\in L^2(T)\right\},
\end{equation*}
highlighting that
$\displaystyle\nabla\varphi\in H(\text{div},T)$, so
$\displaystyle\nabla\varphi\cdot\bn_T\in H^{-\frac1{2}}(\partial T)$ for any $\varphi\in \mathcal{G}_2(T)$.

\begin{definition}[{Weak Laplacian~\cite{mu2014weak,mu2014c}}]
    The weak Laplacian of $\nu=\{\nu_0,\nu_b,\bm{\nu_g}\}\in \mathcal{W}(T)$ is defined as a linear functional $\Delta_w\nu$ in the dual space of $\mathcal{G}_2(T)$, whose action is given by
    \begin{equation}
    (\Delta_w \nu,\varphi)_T
    = (\nu_0,\Delta \varphi)_T
    -\langle \nu_b, \nabla\varphi\cdot\bn_T\rangle_{\partial T}
    +\langle \bm{\nu}_g\cdot\bn_T,\varphi\rangle_{\partial T},\quad\forall \varphi\in \mathcal{G}_2(T).
\end{equation}
\end{definition}

In a finite-dimensional setting, we introduce the discrete weak function space~\cite{mu2014weak},
\begin{equation*}
    \mathcal{W}_k(T) = \left\{
    \nu=\left\{\nu_0, \nu_b,\bm{\nu}_g\right\}\;:\ 
    \nu_0\in P_k(T),\ 
    \nu_b\in P_k(e),\ 
    \bm{\nu}_g\in[P_{k-1}(e)]^d,\ e\subset\partial T\right\}
\end{equation*}
for any given integer $k\geq 2$.

\begin{definition}[Discrete weak Laplacian operator~\cite{mu2014weak}]
    For $\nu = \{\nu_0,\nu_b,\bm{\nu}_g\}\in \mathcal{W}_k(T)$, the discrete weak Laplacian operator is defined as the unique polynomial
$\Delta_{w}\nu\in P_r(T)$ for $r\geq 0$ satisfying
\begin{equation}\label{discreteL}
    (\Delta_{w}\nu,p_r)_T
    =(\nu_0,\Delta p_r)_T
    -\langle \nu_b,\nabla p_r\cdot \bn_T\rangle_{\partial T}
    +\langle \bm{\nu}_g\cdot\bn_T, p_r\rangle_{\partial T},
    \quad\forall p_r\in P_r(T).
\end{equation}
\end{definition}

We emphasize that a continuous (or $C^0$) WG method, first introduced in~\cite{mu2014c}, considers discrete weak functions $\nu = \{\nu_0,\nu_0,\bm{\nu}_g\}$, where $\nu_b$ is simply the trace of $\nu_0$ on $\partial T$.
Moreover, a global WG finite element space~\cite{mu2014weak} is defined by patching $\mathcal{W}_k(T)$ over all the elements $T\in\Th$ while enforcing common values on interior interfaces $e\in \Eho$:
\begin{equation*}
    \mathcal{W}_{k}(\Th) = \left\{
    \nu = \{\nu_0, \nu_b,\bm{\nu}_g\}\,:\,
    \nu|_T=\{\nu_0|_T, \nu_b|_{\partial T},\bm{\nu}_g|_{\partial T}\}\in \mathcal{W}_k(T),\ \forall T\in\Th\right\}.
\end{equation*}
The corresponding subspace incorporating homogeneous boundary conditions is defined as
\begin{equation*}
    \mathcal{W}_{k}^0(\Th) = \left\{
    \nu = \{\nu_0, \nu_b,\bm{\nu}_g\}\in \mathcal{W}_k(\Th)\,:\,
     \nu_b|_{e}=0,\ \forall e\subset\partial \Omega\right\}.
\end{equation*}

Therefore, the general WG finite element method~\cite{mu2014weak} for biharmonic problems seeks $\mu=\{\mu_0, \mu_b,\bm{\mu}_g\}\in \mathcal{W}_k^0(\Th)$ such that
\begin{equation}\label{eqn: WG_biharmonic}
    (\Delta_w \mu, \Delta_w \nu)_{\mathcal{T}_h} + s(\mu,\nu) = (f,\nu_0),
    \quad \forall \nu=\{\nu_0, \nu_b,\bm{\nu}_g\}\in \mathcal{W}_k^0(\Th),
\end{equation}
where the stabilizer $s(\cdot,\cdot)$ is defined by
\begin{equation}\label{penalty}
    s(\mu,\nu)
    = \langle h_e^{-1}\jump{\nabla \mu_0-\bm{\mu}_g}\cdot\bne,\jump{\nabla \nu_0-\bm{\nu}_g}\cdot\bne\rangle_{\Eh} + \sum_{T\in\Th} \langle h_T^{-3}(\mu_0-\mu_b),\nu_0-\nu_b\rangle_{\partial T}.
\end{equation}

We observe that, in the continuous setting (i.e., $\nu_b=\nu_0$ on $\partial T$), the second term of the stabilizer vanishes, resulting in a simpler stabilizer.
Furthermore, selecting $r>k$ in \eqref{discreteL} allows the formulation in \eqref{eqn: WG_biharmonic} to eliminate the need for the stabilizer, as demonstrated in stabilizer-free WG methods~\cite{ye2020stabilizer, zhu2023stabilizer}.
However, using such high-order discrete weak Laplacian operators significantly increases the computational complexity, especially for $r>k\geq 2$.


\subsection{A Weak Galerkin Interpretation of the $C^0$ Interior Penalty Method}

As discussed in Section~\ref{subsec: C0IPM}, the $C^0$-IP method~\cite{brenner2005c} uses a finite-dimensional space based on globally continuous, piecewise $P_k$ Lagrange polynomials, which reduces the degrees of freedom for biharmonic problems while utilizing interior penalty methods.
In this context, we interpret the $C^0$-IP finite element space incorporating the discrete weak Laplacian operator \eqref{discreteL} and demonstrate that this approach results in a simplified formulation for biharmonic problems.

We recall that $V_h\subset H_0^1(\Omega)$ is the set of globally continuous, piecewise $P_k$ Lagrange polynomials used in the $C^0$-IP method. While $k \geq 2$ in general, we fix $k = 2$ to minimize the degrees of freedom and to derive an efficient formula for the discrete weak Laplacian.
Then, any function $v_h\in V_h$ can be interpreted as a discrete weak function in $\mathcal{W}_2(\Th)$.
Specifically,
\begin{equation*}
    \nu_0=v_h, \quad \nu_b=v_h,\quad \bm{\nu}_g = \avg{\nabla v_h}\quad\Rightarrow\quad
    \{v_h,v_h,\avg{\nabla v_h}\}\in\mathcal{W}_2(\Th),
\end{equation*}
since $v_h$ is continuous, but its flux is discontinuous.
Moreover, the discrete weak Laplacian operator \eqref{discreteL} for $v_h\in V_h$ is defined as $\Delta_w v_h|_T\in P_r(T)$ for $r\geq 0$, satisfying
\begin{equation*}
     (\Delta_{w}v_h,p_r)_T
    =(v_h,\Delta p_r)_T
    -\langle v_h,\nabla p_r\cdot \bn_T\rangle_{\partial T}
    +\langle \avg{\nabla v_h}\cdot\bn_T, p_r\rangle_{\partial T},
    \quad\forall p_r\in P_r(T).
\end{equation*}
While various options exist for choosing the polynomial degree $r\geq0$, we primarily focus on the lowest-order discrete weak Laplacian, where $\Delta_w v_h|_T\in P_0(T)$, and compare it with $\Delta v_h|_T\in P_0(T)$ in the $C^0$-IP method.
In the case where $p_0\in P_0(T)$, the formula for the discrete weak Laplacian for $v_h\in V_h$ simplifies to
\begin{equation}\label{eqn: weak_defi} 
    (\Delta_w v_h,p_0)_T = \langle \avg{\nabla v_h}\cdot \bn_T,
    p_0\rangle_{\partial T},\quad\forall p_0\in P_0(T).
\end{equation}

\begin{remark}
For any $v_h\in V_h$, the difference between the discrete weak Laplacian and the standard Laplacian is expressed as
    \begin{equation}
    \left(\Delta_w v_h-\Delta v_h,p_0\right)_{\Th}=\langle\jump{\nabla v_h}\cdot \bn_e,\avg{p_0}\rangle_{\Eho},\quad\forall p_0\in P_0(\Th).\label{eqn: relation1}
\end{equation}
This identity follows directly from the definition of the discrete weak Laplacian, integration by parts, and the trace property \eqref{eqn: jump-avg}.
\end{remark}

Since both $\Delta_w v_h$ and $p_0$ are constant in each $T\in\Th$, we set $p_0=1$ and divide by $|T|$ on both sides to derive an explicit expression for the weak Laplacian on $T$,
\begin{equation*}
    \left.\Delta_w v_h\right|_T = \frac{1}{|T|}\int_{\partial T}\avg{ \nabla v_h}\cdot \bn_T\,ds.
\end{equation*}
We note that $\avg{\nabla v_h}\cdot \bn_T$ is a linear function on each edge or face $e\subset \partial T$, which ensures that the one-point quadrature rule yields exact integration.
Consequently, the final expression becomes
\begin{equation}\label{eqn: discreteweakL}
    \left.\Delta_w v_h\right|_{T} = \frac1{|T|}\sum_{e\subset\partial T}
    \frac{|e|}{2}
    \big( \nabla v_h^+(\mathbf{m}_e) + \nabla v_h^-(\mathbf{m}_e)\big)\cdot (\bn_T|_e),
\end{equation}
where $e\in\Eho$ is an interior interface shared by neighboring elements $T^+$ and $T^-$, and $\mathbf{m}_e$ denotes the barycenter of $e$.
When $e\in\Ehb$, the formula~\eqref{eqn: discreteweakL} is adjusted by using $\avg{\nabla v_h}=\nabla v_h$.

Additionally, for the stabilizer $s(\cdot,\cdot)$ in \eqref{penalty}, we observe the following:
\begin{equation*}
    \nu_0-\nu_b=v_h-v_h=0\quad\text{and}\quad\nabla\nu_0-\bm{\nu}_g = \nabla v_h-\avg{\nabla v_h}=\pm\frac{1}{2}\jump{\nabla v_h},
\end{equation*}
where the sign of the jump depends on the orientation of the elements sharing an interface.
On a boundary edge or face $e \in\Ehb$, it is straightforward to verify that $\nu_0 - \nu_b  = v_h - v_h = 0$ and $\nabla \nu_0 - \bm{\nu}_g = \nabla v_h - \avg{\nabla v_h} = \nabla v_h - \nabla v_h = \mathbf{0}$.
Therefore, the stabilizer $s(u_h,v_h)$ for $u_h,v_h\in V_h$ is defined as
\begin{equation}\label{eqn: final_stabz}
    s(u_h,v_h)
    = \frac{1}{4}\langle h_e^{-1}\jump{\nabla u_h}\cdot\bne,\jump{\nabla v_h}\cdot\bne\rangle_{\Eho}.
\end{equation}

\begin{remark}
    In contrast to the $C^0$-IP method, which requires a sufficiently large penalty parameter $\rho$ in the penalty term $\rho\langle h_e^{-1}\jump{\nabla u_h}\cdot\bne,\jump{\nabla v_h}\cdot\bne\rangle_{\Eho}$, our proposed method is parameter-free. 
Moreover, the formula~\eqref{eqn: discreteweakL} enables elementwise assembly of the stiffness matrix without the need to compute certain trace terms in~\eqref{eqn: C0IP_bilinear_form}.
See~\cite{lee2024low} for a detailed comparison.
\end{remark}


\subsection{A $C^0$ Weak Galerkin Method for Biharmonic Problems}

Therefore, we propose a $C^0$ weak Galerkin ($C^0$-WG) method, formulated using the discrete weak Laplacian and the stabilizer that are compatible with the finite-element functions $v_h\in V_h$.
In the following, we first apply the $C^0$-WG method to the biharmonic problem~\eqref{sys:governing}.
\begin{algorithm}[H]
\caption*{\textbf{$C^0$ Weak Galerkin Method for Biharmonic Problems}}\label{alg: modified}
Find $u_h\in V_h$ such that
\begin{equation}\label{eqn: discrete_weak_biharnomic_problem}
    a_w(u_h,v_h)=(f,v_h),\quad \forall v_h\in V_h,
\end{equation}
where 
\begin{align}
    a_w(v,w) = (\Delta _wv,\Delta_w w)_{\mathcal{T}_h} + \frac{1}{4}\langle h_e^{-1}\jump{\nabla v}\cdot\bne,\jump{\nabla w}\cdot\bne\rangle_{\Eho}.\label{eqn: bilinear_aw}
\end{align}
The weak Laplacian $\Delta_wv|_T\in P_0(T)$ is computed using the explicit formula \eqref{eqn: discreteweakL}, which depends solely on $\nabla v$.
The stabilizer, as defined in \eqref{eqn: final_stabz}, is analogous to the penalty term in the $C^0$-IP bilinear form~\eqref{eqn: C0IP_bilinear_form}, but it does not include a penalty parameter.
\end{algorithm}

Following~\cite{mu2014c,mu2014weak}, we define the mesh-dependent norm associated with the bilinear form $a_w(\cdot,\cdot)$ by
\begin{equation}\label{eqn: defi_w_norm}
    \wnorm{v}^2:= a_w(v,v),\quad\forall v\in V_h.
\end{equation}
Then, the following coercivity and continuity properties hold with respect to this norm.
\begin{lemma}
For any $v,w\in V_h$, the bilinear form $a_w(\cdot,\cdot)$ satisfies
\begin{align}
    &a_w(v,v)=\wnorm{v}^2,\label{eqn: coera}\\
    &|a_w(v,w)|\leq \wnorm{v}\wnorm{w}.\label{eqn: contia}
\end{align}
Therefore, the discrete biharmonic problem \eqref{eqn: discrete_weak_biharnomic_problem} is well-posed.
\end{lemma}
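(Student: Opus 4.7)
The plan is to establish the two estimates~\eqref{eqn: coera}-\eqref{eqn: contia} and then deduce well-posedness of~\eqref{eqn: discrete_weak_biharnomic_problem} via Lax--Milgram.

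The identity~\eqref{eqn: coera} is immediate from the definition~\eqref{eqn: defi_w_norm}, since $\|v\|_w^2 := a_w(v,v)$. For the continuity estimate~\eqref{eqn: contia}, I would apply the Cauchy--Schwarz inequality twice. First, on each element and each interior interface separately: for the volumetric piece, $(\Delta_w v, \Delta_w w)_{\mathcal{T}_h} \leq \|\Delta_w v\|_{0,\mathcal{T}_h}\|\Delta_w w\|_{0,\mathcal{T}_h}$, and for the jump piece, $\frac{1}{4}\langle h_e^{-1}\jump{\nabla v}\cdot\bne, \jump{\nabla w}\cdot\bne\rangle_{\Eho}$ is bounded by the product of the corresponding weighted $L^2$-seminorms. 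Then I would apply the discrete Cauchy--Schwarz inequality $ab + cd \leq \sqrt{a^2+c^2}\sqrt{b^2+d^2}$ to combine them into $\|v\|_w \|w\|_w$.

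For well-posedness, the main point is to check that $\|\cdot\|_w$ is in fact a norm on $V_h$ (definiteness), not merely a seminorm. Suppose $\|v\|_w = 0$ for some $v \in V_h$. Then $\Delta_w v = 0$ on every $T \in \Th$ and $\jump{\nabla v}\cdot\bne = 0$ on every $e \in \Eho$. Since $v \in V_h \subset H_0^1(\Omega)$ is globally continuous, its tangential derivatives are already continuous across interior interfaces, and the vanishing of the normal jumps then yields $\nabla v \in [H^1(\Omega)]^2$, so $v \in H^2(\Omega)$. Using the identity~\eqref{eqn: relation1}, the fact that $\jump{\nabla v}\cdot\bne = 0$, and that $\Delta v|_T \in P_0(T)$ for $v_h \in V_h$ with $k=2$, I would conclude $\Delta_w v = \Delta v$ in a piecewise sense, so $\Delta v = 0$ in $\Omega$. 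Combined with $v = 0$ on $\partial\Omega$, uniqueness for the Dirichlet Laplacian forces $v \equiv 0$.

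With $\|\cdot\|_w$ being a genuine norm on $V_h$ and the bilinear form $a_w$ being symmetric, coercive, and continuous on the finite-dimensional space $(V_h,\|\cdot\|_w)$, the Lax--Milgram theorem gives existence and uniqueness of $u_h \in V_h$ solving~\eqref{eqn: discrete_weak_biharnomic_problem} for every $f \in L^2(\Omega)$. The main subtlety I expect is the definiteness step: one has to invoke the continuity of $v \in V_h$ carefully to deduce $H^2$-regularity from the vanishing of normal-derivative jumps alone, and then use the specific low-order choice $\Delta_w v \in P_0(T)$ to identify the discrete weak Laplacian with the classical one on each element.
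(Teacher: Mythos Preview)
Your argument is correct. The paper itself does not include a proof of this lemma; it simply states the result after introducing the norm $\wnorm{\cdot}$, deferring the underlying framework to~\cite{mu2014c,mu2014weak}. Your explicit verification of definiteness---using the vanishing of normal-derivative jumps together with the $C^0$-continuity of $V_h$ to obtain $v\in H^2(\Omega)$, and then invoking~\eqref{eqn: relation1} with the low-order choice $\Delta_w v|_T,\,\Delta v|_T\in P_0(T)$ to identify $\Delta_w v = \Delta v$ piecewise---supplies precisely the detail that the paper leaves implicit, and the Cauchy--Schwarz argument for~\eqref{eqn: contia} is the standard one.
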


\begin{remark}
    Our proposed method~\eqref{eqn: discrete_weak_biharnomic_problem} achieves the optimal order of convergence (first order with respect to $h = \max_{T \in \mathcal{T}_h} h_T$ in the norm $\norm{\cdot}_w$), following the convergence analysis developed in~\cite{mu2014c,mu2014weak}.  
This convergence order is consistent with the optimal convergence order obtained by the $C^0$-IP method~\cite{brenner2005c}.  
Moreover, the norm equivalence that we will establish in Lemma~\ref{lemma: norm_equiv} further supports this observation.  
Corresponding numerical results will be presented in Section~\ref{subsec: nume_sim_biharnomic}.
\end{remark}

\begin{remark}
    The \(C^0\)-WG method~\eqref{eqn: discrete_weak_biharnomic_problem} does not require a penalty parameter, whereas the $C^0$-IP method~\eqref{eqn: C0IP_biharmonic} relies on a sufficiently large penalty parameter \(\rho\).
    Furthermore, as shown in \eqref{eqn: discreteweakL} and \eqref{eqn: final_stabz}, the bilinear form \(a_w(\cdot, \cdot)\) is computed solely using \(\nabla v|_T\) for all \(T \in \mathcal{T}_h\), in contrast to the $C^0$-IP bilinear form~\eqref{eqn: C0IP_bilinear_form}, which involves second-order derivatives. The proposed bilinear form is assembled using the \(L^2\) inner product of weak Laplacians and a parameter-free penalty term, whereas the $C^0$-IP method includes additional trace terms.
    As a result, the proposed method is penalty-parameter-free and offers reduced computational complexity  (see~\cite{lee2024low} for a detailed comparison).
\end{remark}


\subsection{Discretizing the Optimal Control Problems}\label{subsec: discretize_OCPs}
In this section, we apply the proposed $C^0$-WG method to the reduced optimal control problems~\eqref{reduced_cost} to derive its discrete formulation.
\begin{algorithm}[H]
\caption*{\textbf{Discrete Optimal Control Problems}}\label{alg: discrete_ocp}
Find $\bar{y}_h\in U_h$ such that
\begin{equation}\label{discrete_ocp}
    \bar{y}_h = \argmin_{y_h\in U_h}\frac{1}{2}\left[ \beta a_w(y_h,y_h) + \|y_h-y_d\|^2_{L^2(\Omega;\chi)}\right],
\end{equation}
where
\begin{equation}\label{discrete_admissible}
    U_h=\{y_h\in V_h\,:\, I_h\psi_-\leq I_h y_h\leq I_h \psi_+\},
\end{equation}
and $I_h$ is the $P_1$ nodal interpolation operator associated with $\cT_h$.
\end{algorithm}

Let $\Pi_h: H^2(\Omega) \cap H^1_0(\Omega) \to V_h$ denote the Lagrange nodal interpolation operator.
By \eqref{admissible} and \eqref{discrete_admissible}, $\Pi_h$ maps $U$ into $U_h$, and $U_h$ is nonempty.
Therefore, the discrete problem \eqref{discrete_ocp} has a unique solution $\bar{y}_h \in U_h$, characterized by the following discrete variational inequality:
\begin{equation}\label{discrete_vi_ocp}
    \mathcal{A}_w(\bar{y}_h,z_h-\bar{y}_h)-\iO y_d(z_h-\bar{y}_h)\,d\chi\geq 0,\quad\forall z_h\in U_h,
\end{equation}
where 
\begin{equation}\label{eqn: defi_A_w}
    \mathcal{A}_w(y_h,z_h) = \beta a_w(y_h,z_h) + \int_\Omega y_hz_h\,d\chi.
\end{equation}
This bilinear form serves as the discrete counterpart to the continuous bilinear form
\begin{equation}
    \mathcal{A}(y,z) := \beta \int_\Omega (\Delta y)(\Delta z)\,d\bx + \int_\Omega yz\,d\chi,\label{eqn: defi_A}
\end{equation}
for all $y,\,z\in H^2(\Omega)\cap H^1_0(\Omega)$, which appears in~\eqref{vi} and \eqref{kkt}.

\section{Convergence Analysis}
\label{sec: convergence_analysis}

In this section, we present the convergence analysis of the proposed $C^0$-WG method for the constrained optimal control problems. We begin by recalling the Lagrange nodal interpolation operator $\Pi_h:H^2(\Omega)\cap H^1_0(\Omega)\to V_h$ and its interpolation estimates from~\cite{brenner2024c0}:
\begin{subequations}\label{sys: Pih}
\begin{alignat}{2}
&\norm{\bar{y}-\Pi_h\bar{y}}_{0}\leq Ch^{2+\tau},\label{eqn: Pih_0}\\
&|\bar{y}-\Pi_h\bar{y}|_1\leq Ch^{1+\tau},\label{eqn: Pih_1}\\
&\norm{\bar{y}-\Pi_h\bar{y}}_{L^\infty(\Omega)}\leq Ch^{1+\tau},\label{eqn: Pih_inf}\\
&\norm{\bar{y}-\Pi_h\bar{y}}_{L^2(\Omega;\chi)}\leq Ch^{1+\tau},\label{eqn: Pih_chi}\\
&\norm{\bar{y}-\Pi_h\bar{y}}_h\leq Ch^\tau,\label{eqn: Phi_h}
\end{alignat}
\end{subequations}
where
\begin{equation*}
    \tau = 
    \left\{\begin{array}{ll}
        \alpha, & \text{if $\Th$ is quasi-uniform,} \\
        1-\epsilon, & \text{if $\Th$ is graded around the corners of $\Omega$} \\
        & \quad \text{where the interior angles are greater than $\pi/2$.}
    \end{array}\right.
\end{equation*}
We also define the operator $\Theta_h:H^2(\Omega)\cap H_0^1(\Omega)\rightarrow \mathcal{W}_2(\Th)$ as
\begin{equation}\label{eqn: operTheta_defi}
    \Theta_h\bar{y} = \{\Pi_h\bar{y},\Pi_h\bar{y},Q_{\bn}(\nabla \bar{y}\cdot\bn_e)\bn_e\}\in \mathcal{W}_2(\Th),
\end{equation}
where $Q_\bn$ is the $L^2$-projection operator onto $P_1(e)$ for each $e\in \Eh$, as introduced in~\cite{mu2014c}. This definition satisfies the following commutative property~\cite{mu2014c}:
\begin{equation}\label{eqn: comm_prop}
    \Delta_w\Theta_h \bar{y}= \mathbb{Q}_h\Delta \bar{y},
\end{equation}
where $\mathbb{Q}_h$ denotes the local $L^2$-projection onto $P_0(T)$.

We recall the mesh-dependent norms~\eqref{eqn: defi_h_norm} and \eqref{eqn: defi_w_norm} used in the convergence analysis:
\begin{align*}
    \hnorm{v}^2 &= (D^2v,D^2v)_{\Th} + \langle h_e^{-1}\jump{\nabla v}\cdot\bne,\jump{\nabla v}\cdot\bne\rangle_{\Eho}, \\
    \wnorm{v}^2&= (\Delta_w v,\Delta_w v)_{\Th} + \frac{1}{4}\langle h_e^{-1}\jump{\nabla v}\cdot\bne,\jump{\nabla v}\cdot\bne\rangle_{\Eho}.
\end{align*}
For $v\in H^2(\Omega)\cap H_0^1(\Omega)$, we have $\Delta_w v = \Delta v$ on each $T\in\Th$ by~\eqref{eqn: relation1}, and therefore $\norm{v}_w = \norm{\Delta v}_{0,\Th}$, which implies
\begin{equation}\label{eqn: norm_bd}
    \wnorm{v} \leq C \hnorm{v},
\end{equation}
for some constant \( C > 0 \).
Moreover, in the finite-dimensional space \( V_h \), the following norm equivalence holds.
\begin{lemma}\label{lemma: norm_equiv}
For any $v_h\in V_h$, there exist positive constants $\gamma_*$ and $\gamma^*$, independent of $h: = \max_{T\in \Th} h_T$, such that
\begin{equation}\label{eqn: norm_equiv}
    \gamma_*\hnorm{v_h} \leq \wnorm{v_h}\leq \gamma^*\hnorm{v_h},\quad\forall v_h\in V_h.
\end{equation}
\end{lemma}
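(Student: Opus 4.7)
The plan is to leverage the fact that the jump contributions appearing in $\hnorm{\cdot}$ and $\wnorm{\cdot}$ agree up to the constant factor $\tfrac{1}{4}$, so the real work is to compare $\norm{\Delta_w v_h}_{0,\Th}$ with $\norm{D^2 v_h}_{0,\Th}$. Both norms will be shown to be equivalent to the intermediate quantity $\norm{\Delta v_h}_{0,\Th}^2+\norm{h_e^{-1/2}\jump{\nabla v_h}\cdot\bne}_{0,\Eho}^2$, and the identity~\eqref{eqn: relation1} is the central algebraic device, used in both directions.

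For the upper bound $\wnorm{v_h}\le \gamma^*\hnorm{v_h}$, I first use that both $\Delta v_h|_T$ and $\Delta_w v_h|_T$ lie in $P_0(T)$ for $v_h\in V_h$, so $p_0:=\Delta_w v_h-\Delta v_h$ is admissible in~\eqref{eqn: relation1}. Substituting this choice of $p_0$ gives
\begin{equation*}
\norm{\Delta_w v_h-\Delta v_h}_{0,\Th}^2=\langle\jump{\nabla v_h}\cdot\bne,\avg{\Delta_w v_h-\Delta v_h}\rangle_{\Eho},
\end{equation*}
and Cauchy--Schwarz on each interior interface combined with the elementary scaling $\norm{q}_{0,e}^2\le Ch_T^{-1}\norm{q}_{0,T}^2$ for constants $q$ on $T$ produces $\norm{\Delta_w v_h-\Delta v_h}_{0,\Th}\le C\norm{h_e^{-1/2}\jump{\nabla v_h}\cdot\bne}_{0,\Eho}$. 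A triangle inequality with the pointwise bound $|\Delta v_h|\le\sqrt{d}\,|D^2 v_h|$ then yields $\wnorm{v_h}^2 \le C\hnorm{v_h}^2$.

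For the lower bound $\gamma_*\hnorm{v_h}\le\wnorm{v_h}$, the same estimate immediately gives $\norm{\Delta v_h}_{0,\Th}\le C\wnorm{v_h}$. What remains is the broken Hessian--Laplacian inequality
\begin{equation*}
\norm{D^2 v_h}_{0,\Th}^2\le C\bigl(\norm{\Delta v_h}_{0,\Th}^2+\norm{h_e^{-1/2}\jump{\nabla v_h}\cdot\bne}_{0,\Eho}^2\bigr).
\end{equation*}
I would prove this by introducing an $H^2$-enrichment $E_h:V_h\to V_h^{\mathrm{conf}}\subset H^2(\Omega)\cap H^1_0(\Omega)$ (of Hsieh--Clough--Tocher or Argyris type), satisfying the standard estimate $\norm{D^2(v_h-E_h v_h)}_{0,\Th}\le C\norm{h_e^{-1/2}\jump{\nabla v_h}\cdot\bne}_{0,\Eho}$. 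Since $\Omega$ is convex, the Miranda--Talenti inequality yields $\norm{D^2 E_hv_h}_{0,\Omega}\le\norm{\Delta E_hv_h}_{0,\Omega}$, and triangle inequalities applied separately to $D^2$ and $\Delta$ close the estimate.

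The main obstacle is precisely this Hessian--Laplacian inequality. It cannot be established elementwise --- on a single triangle the three independent entries of the constant $P_2$ Hessian are not controlled by the single scalar $\Delta v_h$ --- so the argument genuinely uses the global $C^0$ continuity of $V_h$, the convexity of $\Omega$ (via Miranda--Talenti), and the availability of a suitable $H^2$-conforming enrichment with jump-controlled approximation. The upper bound, in contrast, is an essentially immediate consequence of~\eqref{eqn: relation1} together with trace-type scaling for piecewise constants.
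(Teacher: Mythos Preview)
Your proposal is correct and, for the lower bound, more complete than the paper's own argument. For the upper bound the two approaches coincide: the paper chooses $p_0=\Delta_w v_h$ in~\eqref{eqn: relation1} to obtain $\norm{\Delta_w v_h}_{0,\Th}\le \norm{\Delta v_h}_{0,\Th}+C\norm{h_e^{-1/2}\jump{\nabla v_h}\cdot\bne}_{0,\Eho}$ directly, while you choose $p_0=\Delta_w v_h-\Delta v_h$ and reach the same estimate through a triangle inequality---a cosmetic difference only. For the lower bound the paper simply sets $p_0=\Delta v_h$ in~\eqref{eqn: relation1}, records $\norm{\Delta v_h}_{0,\Th}\le C\wnorm{v_h}$, and stops. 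You correctly observe that this alone does not yield $\hnorm{v_h}\le C\wnorm{v_h}$, because $\hnorm{\cdot}$ is built from the full Hessian $D^2v_h$ rather than the Laplacian, and the two are not elementwise comparable for $P_2$ elements. Your route via an $H^2$-conforming enrichment together with the Miranda--Talenti identity~\eqref{biharmonicbi} on the convex domain $\Omega$ is a valid and standard way to supply the missing broken Hessian--Laplacian inequality; the paper tacitly relies on this fact from the $C^0$-IP literature (the enrichment $E_h$ and its estimates do appear later in~\eqref{sys: Eh}, and the sharper jump-only bound $\norm{D^2(v_h-E_hv_h)}_{0,\Th}\le C\norm{h_e^{-1/2}\jump{\nabla v_h}\cdot\bne}_{0,\Eho}$ is standard in the cited sources). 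In short: same skeleton via~\eqref{eqn: relation1}, but you make explicit a nontrivial step the paper leaves implicit.
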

\begin{proof}
     For any $v_h\in V_h$, choosing $p_0 = \Delta_w v_h$ in \eqref{eqn: relation1} yields
    \begin{equation*}
        \norm{\Delta_w v_h}_{0,\Th}^2=\left(\Delta v_h,\Delta_w v_h\right)_{\Th}+\langle\jump{\nabla v_h}\cdot \bn_e,\avg{\Delta_w v_h}\rangle_{\Eho}.
    \end{equation*}
    Applying the Cauchy-Schwarz inequality gives
    \begin{equation*}
        \left(\Delta v_h,\Delta_w v_h\right)_{\Th}\leq \norm{\Delta v_h}_{0,\Th}\norm{\Delta_w v_h}_{0,\Th}.
    \end{equation*}
    Using the Cauchy-Schwarz inequality together with the trace inequality~\eqref{eqn: trace}, we estimate the jump term:
    \begin{align*}
        \langle\jump{\nabla v_h}\cdot \bn_e,\avg{\Delta_w v_h}\rangle_{\Eho}&\leq \norm{h_e^{-1/2}\jump{\nabla v_h}\cdot\bne}_{0,\Eho}\norm{h_e^{1/2}\avg{\Delta_w v_h}}_{0,\Eho}\\
        &\leq C \norm{h_e^{-1/2}\jump{\nabla v_h}\cdot\bne}_{0,\Eho}\norm{\Delta_w v_h}_{0,\Th}.
    \end{align*}
    Combining these estimates yields
    \begin{equation*}
        \norm{\Delta_w v_h}_{0,\Th}\leq C\hnorm{v_h}.
    \end{equation*}
    To obtain the reverse bound, we choose \( p_0 = \Delta v_h \) in~\eqref{eqn: relation1} and apply similar arguments to obtain
    \begin{equation*}
        \norm{\Delta v_h}_{0,\Th}\leq C\wnorm{v_h}.
        \hfill \qedhere
    \end{equation*}
\end{proof}

We now introduce a mesh-dependent norm for the optimal control problems, as defined in~\cite{brenner2024c0}:
\begin{equation*}
    \trinorm{z}_{\mathcal{H}}^2:= \beta\hnorm{z}^2 + \norm{z}_{L^2(\Omega;\chi)}^2,
\end{equation*}
and we define its counterpart in the weak Galerkin framework:
\begin{equation*}
    \trinorm{z}_{\mathcal{W}}^2:= \beta\wnorm{z}^2 + \norm{z}_{L^2(\Omega;\chi)}^2.
\end{equation*}
Using this norm, we establish the following properties of the bilinear form $\mathcal{A}_w(\cdot,\cdot)$ in \eqref{eqn: defi_A_w} with respect to $\trinorm{\cdot}_{\mathcal{W}}$. 
\begin{lemma}
For any \( w_h, z_h \in V_h \), the bilinear form satisfies the following properties. There exist constants \( C_\dag > 0 \) and \( C_\ddag > 0 \) such that
\begin{align}
    &\mathcal{A}_w(z_h,z_h)\geq C_\dag\trinorm{z_h}_{\mathcal{W}}^2,\label{eqn: coerA}\\
    &\mathcal{A}_w(w_h,z_h)\leq C_\ddag\trinorm{w_h}_{\mathcal{W}}\trinorm{z_h}_{\mathcal{W}}.\label{eqn: contiA}
\end{align}    
\end{lemma}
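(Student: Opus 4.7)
The plan is to obtain both bounds essentially by assembling the two pieces of $\mathcal{A}_w$ separately, using the properties of $a_w(\cdot,\cdot)$ established in \eqref{eqn: coera}--\eqref{eqn: contia} for the bilinear part, and elementary Cauchy--Schwarz for the $L^2(\Omega;\chi)$ part, then repackaging them with respect to the composite norm $\trinorm{\cdot}_{\mathcal{W}}$.

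For coercivity, I would simply substitute $v=z_h$ into \eqref{eqn: coera} to get $a_w(z_h,z_h)=\wnorm{z_h}^2$, and then note that $(z_h,z_h)_{L^2(\Omega;\chi)}=\norm{z_h}_{L^2(\Omega;\chi)}^2\geq 0$ since $\chi$ is a nonnegative Radon measure (as defined in \eqref{radon}). Adding the two contributions with the correct weight $\beta$ recovers exactly the definition of $\trinorm{z_h}_{\mathcal{W}}^2$, so \eqref{eqn: coerA} holds with $C_\dag=1$.

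For continuity, I would first invoke \eqref{eqn: contia} to bound $\beta|a_w(w_h,z_h)|\leq (\sqrt{\beta}\wnorm{w_h})(\sqrt{\beta}\wnorm{z_h})$, and then apply the Cauchy--Schwarz inequality for the $L^2(\Omega;\chi)$ inner product to obtain $|(w_h,z_h)_{L^2(\Omega;\chi)}|\leq \norm{w_h}_{L^2(\Omega;\chi)}\norm{z_h}_{L^2(\Omega;\chi)}$. The key repackaging step is the discrete Cauchy--Schwarz inequality $ab+cd\leq \sqrt{a^2+c^2}\sqrt{b^2+d^2}$ applied to $a=\sqrt{\beta}\wnorm{w_h}$, $b=\sqrt{\beta}\wnorm{z_h}$, $c=\norm{w_h}_{L^2(\Omega;\chi)}$, $d=\norm{z_h}_{L^2(\Omega;\chi)}$, which delivers $|\mathcal{A}_w(w_h,z_h)|\leq \trinorm{w_h}_{\mathcal{W}}\trinorm{z_h}_{\mathcal{W}}$, so that \eqref{eqn: contiA} holds with $C_\ddag=1$.

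Since both bounds reduce to already-proven facts about $a_w$, the measure-theoretic Cauchy--Schwarz, and a two-term vector inequality, I do not expect any real obstacle; the only point that requires a moment of care is the bookkeeping of the factor $\beta$ so that it is absorbed into the $\sqrt{\beta}\,\wnorm{\cdot}$ factor matching the definition of $\trinorm{\cdot}_{\mathcal{W}}$. Because $L^2(\Omega;\chi)$ is a genuine Hilbert space for the nonnegative Radon measure $\chi$, no extra assumptions (e.g.\ nontriviality of the supports $\mathscr{P}$, $\mathscr{C}$, $\mathscr{E}$) are needed, and the constants are independent of $h$ and $\beta$.
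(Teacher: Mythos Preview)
Your proposal is correct and matches the paper's approach: the paper's proof simply states that both estimates follow from the coercivity and continuity results \eqref{eqn: coera}--\eqref{eqn: contia}, which is exactly what you unpack in detail. Your explicit identification of $C_\dag=C_\ddag=1$ and the use of the two-term Cauchy--Schwarz inequality for the continuity bound are the natural way to fill in that one-line proof.
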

\begin{proof}
    These estimates follow from the coercivity and continuity results given in~\eqref{eqn: coera} and~\eqref{eqn: contia}. 
\end{proof}

In addition, let \( W_h \subset H^2(\Omega) \cap H_0^1(\Omega) \) denote the Hsieh-Clough-Tocher finite element space~\cite{ciarlet1974element} associated with \( \Th \). We define an operator \( E_h : V_h \rightarrow W_h \) by vertex averaging~\cite{brenner2010posteriori,brenner2011c}, such that
\begin{equation*}
    (E_hz_h)(\mathbf{x}_v) = z_h(\mathbf{x}_v),
\end{equation*}
for every vertex \( \mathbf{x}_v \) of \( \Th \). The operator \( E_h \) satisfies the following estimates, as established in~\cite{brenner2024c0}: for every \( z_h \in V_h \),
\begin{subequations}\label{sys: Eh}
\begin{alignat}{2}
&\norm{z_h-E_hz_h}_{0,\Th}\leq Ch^{2}\hnorm{z_h},\label{eqn: Eh_0}\\
&|z_h-E_hz_h|_{1,\Th}\leq Ch\hnorm{z_h},\label{eqn: Eh_1}\\
&|E_hz_h|_{2,\Th}\leq C\hnorm{z_h},\label{eqn: Eh_2}\\
&\norm{z_h-E_hz_h}_{L^2(\Omega;\chi)}\leq Ch(1+|\ln{h}|)^{1/2}\hnorm{z_h},\label{eqn: estimate_z_Ez}
\end{alignat}
\end{subequations}
where \( C > 0 \) is a constant that may differ across estimates but depends only on the shape regularity of \( \Th \).


\subsection{Key Estimates}\label{subsec: key_estimates}

To proceed with the convergence analysis, we first establish an estimate for a key term.
\begin{lemma}\label{lemma: estimate_aw_a}
Let \( \bar{y}\in H^2(\Omega)\cap H_0^1(\Omega) \) be the optimal state of the continuous optimal control problems \eqref{reduced_cost}.
There exists a positive constant $C$, independent of $h$, such that:
    \begin{equation}
    a_w(\Pi_h\bar{y},z_h) - a(\bar{y},E_hz_h)\leq Ch^\tau \norm{z_h}_h,\quad \forall z_h\in V_h. \label{eqn: estimate_aw_a}
    \end{equation}
\end{lemma}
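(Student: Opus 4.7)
My plan is to split $a_w(\Pi_h\bar{y}, z_h) - a(\bar{y}, E_h z_h)$ into the stabilization contribution in $a_w$, a weak-interpolation consistency term, and a coupling residual between $\Delta_w z_h$ and $\Delta E_h z_h$, and to bound each by $C h^\tau \hnorm{z_h}$. The key ingredients are the interpolation estimates~\eqref{sys: Pih}, the enriching bounds~\eqref{sys: Eh}, the commutative identity~\eqref{eqn: comm_prop}, the discrete relation~\eqref{eqn: relation1}, the trace inequality~\eqref{eqn: trace}, and the norm equivalence of Lemma~\ref{lemma: norm_equiv}. The stabilizer $S := \tfrac{1}{4}\langle h_e^{-1}\jump{\nabla\Pi_h\bar{y}}\cdot\bne, \jump{\nabla z_h}\cdot\bne\rangle_{\Eho}$ is treated first: since $\bar{y}\in H^2(\Omega)$ gives $\jump{\nabla\bar{y}} = 0$, we have $\jump{\nabla\Pi_h\bar{y}} = \jump{\nabla(\Pi_h\bar{y}-\bar{y})}$, and Cauchy-Schwarz together with~\eqref{eqn: Phi_h} delivers $|S|\leq C h^\tau \hnorm{z_h}$.

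Next, for the principal part I would insert $\Theta_h\bar{y}$ and use $\Delta_w\Theta_h\bar{y} = \mathbb{Q}_h\Delta\bar{y}$ together with $\Delta_w z_h\in P_0(\Th)$ to write
\[
(\Delta_w\Pi_h\bar{y},\Delta_w z_h)_{\Th} - (\Delta\bar{y},\Delta E_h z_h) = \bigl(\Delta_w(\Pi_h\bar{y}-\Theta_h\bar{y}),\Delta_w z_h\bigr)_{\Th} + (\Delta\bar{y},\Delta_w z_h - \Delta E_h z_h)_{\Th}.
\]
For the first summand, $\Pi_h\bar{y}$ and $\Theta_h\bar{y}$ share their $\nu_0$ and $\nu_b$ components, so the weak Laplacian definition reduces the difference to edge integrals involving $\avg{\nabla\Pi_h\bar{y}} - Q_\bn(\nabla\bar{y}\cdot\bne)\bne$. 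Splitting this as the interpolation piece $\avg{\nabla(\Pi_h\bar{y}-\bar{y})}\cdot\bne$ plus the projection piece $(I - Q_\bn)(\nabla\bar{y}\cdot\bne)$ and applying the trace inequality together with~\eqref{eqn: Pih_1} and~\eqref{eqn: Phi_h} yields $\|\Delta_w(\Pi_h\bar{y}-\Theta_h\bar{y})\|_{0,\Th}\leq C h^\tau$; Lemma~\ref{lemma: norm_equiv} then converts this into a $C h^\tau\hnorm{z_h}$ bound via Cauchy-Schwarz.

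Finally, for the coupling residual $(\Delta\bar{y},\Delta_w z_h - \Delta E_h z_h)_{\Th}$, the identity $\Delta_w E_h z_h = \mathbb{Q}_h\Delta E_h z_h$ (valid since $E_h z_h\in H^2(\Omega)\cap H^1_0(\Omega)$) rewrites it as
\[
(\Delta\bar{y},\Delta_w z_h - \Delta_w E_h z_h)_{\Th} - \bigl((I-\mathbb{Q}_h)\Delta\bar{y},\Delta E_h z_h\bigr)_{\Th}.
\]
The second term is $O(h^\tau\hnorm{z_h})$ via the $L^2$-projection error $\|(I-\mathbb{Q}_h)\Delta\bar{y}\|_0\leq C h^\tau$ (using $\bar{y}\in H^{2+\alpha}$) and~\eqref{eqn: Eh_2}. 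For the first term, the edge formula~\eqref{eqn: discreteweakL} combined with the single-valued $\nabla E_h z_h$ across interior edges reduces it (up to boundary contributions) to $-\langle\avg{\nabla(z_h - E_h z_h)}\cdot\bne, \jump{\mathbb{Q}_h\Delta\bar{y}}\rangle_{\Eho}$. A Bramble-Hilbert argument using $\bar{y}\in H^{2+\alpha}$ yields $\|h_e^{-1/2}\jump{\mathbb{Q}_h\Delta\bar{y}}\|_{0,\Eho}\leq C h^{\alpha-1}|\bar{y}|_{H^{2+\alpha}}$, while the trace inequality together with~\eqref{eqn: Eh_1}--\eqref{eqn: Eh_2} gives $\|h_e^{1/2}\avg{\nabla(z_h - E_h z_h)}\|_{0,\Eh}\leq C h\hnorm{z_h}$; the product is $C h^\alpha\hnorm{z_h}\leq C h^\tau\hnorm{z_h}$. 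The hardest step is this last one, where one must simultaneously obtain the correct $h^{\alpha-1}$-scaled jump bound on the piecewise-constant projection $\mathbb{Q}_h\Delta\bar{y}$ and handle the boundary contributions using $z_h|_{\partial\Omega} = E_h z_h|_{\partial\Omega} = 0$ together with the Dirichlet regularity of $\bar{y}$.
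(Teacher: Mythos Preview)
Your decomposition differs from the paper's: the paper inserts the $C^0$-IP form $a_h(\cdot,\cdot)$ as an intermediary, writing $a_w(\Pi_h\bar y,z_h)-a(\bar y,E_hz_h)$ as a four-term telescoping sum through $a_w(\Theta_h\bar y,z_h)$, $a_h(\bar y,z_h)$ and $a_h(\Pi_h\bar y,z_h)$, and then invokes the estimate $a_h(\Pi_h\bar y,z_h)-a(\bar y,E_hz_h)\leq Ch^\tau\hnorm{z_h}$ already established in~\cite{brenner2024c0}. Your route bypasses $a_h$ entirely and compares $(\Delta_w z_h)$ to $(\Delta E_h z_h)$ directly; this is more self-contained but forces you to deal with contributions the paper offloads to the cited reference.

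The genuine gap is the boundary term. After applying~\eqref{eqn: jump-avg} to $(\mathbb{Q}_h\Delta\bar y,\Delta_w z_h-\Delta_w E_hz_h)_{\Th}$ you obtain, besides the interior-edge piece you analyze, the boundary contribution
\[
\langle(\nabla z_h-\nabla E_hz_h)\cdot\bne,\ \mathbb{Q}_h\Delta\bar y\rangle_{\Ehb}.
\]
The conditions $z_h|_{\partial\Omega}=E_hz_h|_{\partial\Omega}=0$ only kill tangential derivatives; the normal derivatives $\partial_n z_h$ and $\partial_n(E_hz_h)$ do \emph{not} coincide on $\partial\Omega$, and with the available estimate $\|h_e^{1/2}\nabla(z_h-E_hz_h)\|_{0,\Ehb}\lesssim h\hnorm{z_h}$ you would need $\|h_e^{-1/2}\mathbb{Q}_h\Delta\bar y\|_{0,\Ehb}\lesssim h^{\tau-1}$. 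That does not follow from $\bar y\in H^{2+\alpha}(\Omega)\cap H_0^1(\Omega)$ alone: without additional information you only get $\|h_e^{-1/2}\mathbb{Q}_h\Delta\bar y\|_{0,\Ehb}\lesssim h^{-1}\|\Delta\bar y\|_0$, which leaves a term of size $O(\hnorm{z_h})$. To rescue this you would need the \emph{natural} boundary condition $\Delta\bar y=0$ on $\partial\Omega$ (a consequence of the KKT system~\eqref{kkt} in the simply-supported setting, not of ``Dirichlet regularity'') together with a Hardy/Poincar\'e argument showing $\|\Delta\bar y\|_{0,T}\lesssim h_T^{\alpha}|\bar y|_{H^{2+\alpha}}$ on boundary triangles; this in turn needs $\alpha>1/2$ for the trace of $\Delta\bar y$ to even be defined. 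None of this is in your sketch, and it is exactly the difficulty the paper sidesteps by routing through $a_h$ and citing~\cite{brenner2024c0}.

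A smaller point: your final inequality $Ch^\alpha\hnorm{z_h}\leq Ch^\tau\hnorm{z_h}$ assumes $\alpha\geq\tau$, which fails in the graded-mesh case where $\tau=1-\epsilon$ may exceed the global regularity index $\alpha$. To recover $h^\tau$ there, the Bramble--Hilbert step must be carried out with local mesh sizes $h_T$ and the grading exploited, as the paper does implicitly in its estimate of term~$(\mathrm{II})$.
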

\begin{proof}
We decompose the difference into four terms:
\begin{align*}
    a_w(\Pi_h\bar{y},z_h) - a(\bar{y},E_hz_h) &= \underbrace{a_w(\Pi_h\bar{y},z_h) - a_w(\Theta_h\bar{y},z_h)}_{(\mathrm{I})} + \underbrace{a_w(\Theta_h\bar{y},z_h) - a_h(\bar{y},z_h)}_{(\mathrm{II})}\\
    &\quad\quad+\underbrace{a_h(\bar{y},z_h) - a_h(\Pi_h\bar{y},z_h)}_{(\mathrm{III})} + \underbrace{a_h(\Pi_h\bar{y},z_h) - a(\bar{y},E_hz_h)}_{(\mathrm{IV})},
    \end{align*}
and estimate each term individually.

\vskip 5pt
    \noindent\textbf{Estimate of $(\mathrm{I})$}: Expanding the definition of \( a_w(\cdot, \cdot) \) from \eqref{eqn: WG_biharmonic}, we write
        \begin{equation*}
            (\mathrm{I}) = a_w(\Pi_h\bar{y}-\Theta_h\bar{y},z_h) = (\Delta_w(\Pi_h\bar{y}-\Theta_h\bar{y}),\Delta_wz_h)_{\Th}+s(\Pi_h\bar{y}-\Theta_h\bar{y},z_h).
        \end{equation*}
    For the first term, we use the definition of the discrete weak Laplacian \eqref{discreteL}, the projection operator \eqref{eqn: operTheta_defi}, the trace inequality \eqref{eqn: trace}, and the interpolation estimates \eqref{sys: Pih}:
        \begin{align*}
            (\Delta_w(\Pi_h\bar{y}-\Theta_h\bar{y}),\Delta_wz_h)_{\Th} &= \sum_{T\in\Th}\langle\avg{\nabla\Pi_h\bar{y}}\cdot\bn_T-Q_{\bn}(\nabla\bar{y}\cdot\bn_T),\Delta_wz_h\rangle_{\partial T}\\
            &=\sum_{T\in\Th}\langle(\avg{\nabla\Pi_h\bar{y}}-\nabla\bar{y})\cdot\bn_T,\Delta_wz_h\rangle_{\partial T}\\
            &\leq \sum_{T\in\Th}\norm{h_T^{-1/2}\avg{\nabla\Pi_h\bar{y} - \nabla\bar{y}}}_{0,\partial T} \norm{h_T^{1/2}\Delta_wz_h}_{0,\partial T}\\
            &\leq Ch^\tau\norm{\Delta_wz_h}_{0,\Th}.
        \end{align*}
    For the stabilizer defined in~\eqref{penalty} (with $\bm{\nu}_g=\avg{\nabla v_h}$ for $v_h\in V_h$), by applying the regularity of \( \bar{y} \), the trace inequality \eqref{eqn: trace}, and the interpolation estimates \eqref{sys: Pih}, we obtain:
        \begin{align*}
            &s(\Pi_h\bar{y}-\Theta_h\bar{y},z_h)\\
            &\quad\quad=s(\Pi_h\bar{y},z_h) - s(\Theta_h\bar{y},z_h)\\
            &\quad\quad= \frac{1}{4}\langle h_e^{-1} \jump{\nabla\Pi_h\bar{y}}\cdot\bne,\jump{\nabla z_h}\cdot\bne\rangle_{\Eho}- \frac{1}{2} \langle h_e^{-1} \jump{\nabla \Pi_h\bar{y}}\cdot\bn_e,\jump{\nabla z_h}\cdot\bn_e\rangle_{\Eho}\\
            &\quad\quad=-\frac{1}{4} \langle h_e^{-1} \jump{\nabla\Pi_h\bar{y}-\nabla\bar{y}}\cdot\bne,\jump{\nabla z_h}\cdot\bne\rangle_{\Eho}\\
            &\quad\quad\leq C\norm{h_e^{-1/2}\jump{\nabla\Pi_h\bar{y}-\nabla\bar{y}}}_{0,\Eho}\norm{h_e^{-1/2}\jump{\nabla z_h}\cdot\bne}_{0,\Eho}\\
            &\quad\quad\leq Ch^\tau (s(z_h,z_h))^{1/2}.
        \end{align*}
        The second identity holds because $\jump{Q_\bn(\nabla\bar{y}\cdot\bn_e)\bne}=\mathbf{0}$ on $e\in\Eho$.
    Thus, we conclude:
        \begin{equation*}
            (\mathrm{I})\leq Ch^\tau \wnorm{z_h} \leq Ch^\tau\hnorm{z_h},
        \end{equation*}
    using the norm equivalence \eqref{eqn: norm_equiv}.

\vskip 5pt
\noindent\textbf{Estimate of $(\mathrm{II})$:} Using the identity \eqref{eqn: relation1} and the commutative property \eqref{eqn: comm_prop}, we obtain:
\begin{align*}
    (\Delta_w\Theta_h\bar{y},\Delta_wz_h)_{\Th} &= (\Delta_w\Theta_h\bar{y},\Delta z_h)_{\Th}+\langle \avg{\Delta_w\Theta_h\bar{y}} ,\jump{\nabla z_h}\cdot\bn_e\rangle_{\Eho}\\
    &=(\mathbb{Q}_h\Delta\bar{y},\Delta z_h)_{\Th}+\langle \avg{\mathbb{Q}_h\Delta\bar{y}} ,\jump{\nabla z_h}\cdot\bn_e\rangle_{\Eho}\\
    &=(\Delta\bar{y},\Delta z_h)_{\Th}+\langle \avg{\mathbb{Q}_h\Delta\bar{y}} ,\jump{\nabla z_h}\cdot\bn_e\rangle_{\Eho}\\
    &=a_h(\bar{y},z_h) + \langle\avg{\mathbb{Q}_h\Delta\bar{y}-\Delta\bar{y}},\jump{\nabla z_h}\cdot\bn_e\rangle_{\Eho}.
\end{align*}
Using this result, along with the trace inequality~\eqref{eqn: trace} and the estimates \eqref{sys: Pih}, we estimate the term $(\mathrm{II})$:
\begin{align*}
    (\mathrm{II}) &= a_w(\Theta_h\bar{y}, z_h) - a_h(\bar{y},z_h)\\
    & = s(\Theta_h\bar{y},z_h)+\langle\avg{\mathbb{Q}_h\Delta\bar{y}-\Delta\bar{y}},\jump{\nabla z_h}\cdot\bn_e\rangle_{\Eho}\\
    &= \frac{1}{2} \langle h_e^{-1} \jump{\nabla \Pi_h\bar{y}-\nabla\bar{y}}\cdot\bn_e,\jump{\nabla z_h}\cdot\bn_e\rangle_{\Eho}+\langle\avg{\mathbb{Q}_h\Delta\bar{y}-\Delta\bar{y}},\jump{\nabla z_h}\cdot\bn_e\rangle_{\Eho}\\
    &\leq C\left( \norm{h_e^{-1/2}\jump{\nabla \Pi_h\bar{y}-\nabla\bar{y}}}_{0,\Eho}\norm{h_e^{-1/2}\jump{\nabla z_h}\cdot\bne}_{0,\Eho}\right.\\
    &\qquad\qquad\qquad\qquad\qquad  \left.+\norm{h_e^{1/2}\avg{\mathbb{Q}_h\Delta\bar{y}-\Delta\bar{y}}}_{0,\Eho}\norm{h_e^{-1/2}\jump{\nabla z_h}\cdot\bne}_{0,\Eho}\right)\\
    &\leq Ch^\tau\norm{z_h}_h.
\end{align*}
The term involving $\mathbb{Q}_h\Delta\bar{y}-\Delta \bar{y}$ is bounded using the Bramble-Hilbert lemma~\cite{bramble1970estimation,dupont1980polynomial}, combined with the regularity of $\bar{y}$, following the approach presented in~\cite[Lemma 2.5]{brenner2012quadratic}:
\begin{align*}
    \sum_{e\in\Eho}\norm{h_e^{1/2}\avg{\mathbb{Q}_h\Delta\bar{y}-\Delta\bar{y}}}_{0,e}^2&\leq C\sum_{e\in\Eho}\sum_{T\in \mathcal{T}_e}|\mathbb{Q}_h\Delta\bar{y}-\Delta\bar{y}|_{0,T}^2\\
    &\leq Ch^{2\tau}\sum_{e\in\Eho}\sum_{T\in \mathcal{T}_e}|\bar{y}|_{H^{2+\tau}(Q_e)}^2\\
    &\leq Ch^{2\tau}|\bar{y}|_{H^{2+\tau}(\Omega)}^2.
\end{align*}
Here, $\mathcal{T}_e$ denotes the collection of elements sharing $e\in \Eho$, and $Q_e$ represents the union of the two adjacent elements in $\mathcal{T}_e$.

\vskip 5pt
\noindent\textbf{Estimate of $(\mathrm{III})$:} Using the continuity result \eqref{eqn: hcontia} and the interpolation estimate \eqref{eqn: Phi_h}, we obtain
\begin{align*}
    (\mathrm{III}) = a_h(\bar{y}-\Pi_h\bar{y},z_h)\leq C\norm{\bar{y} - \Pi_h\bar{y}}_h\norm{z_h}_h\leq Ch^\tau\norm{z_h}_h.
\end{align*}

\noindent\textbf{Estimate of $(\mathrm{IV})$:} This term quantifies the discrepancy between the bilinear forms \( a_h(\cdot,\cdot) \) and \( a(\cdot,\cdot) \). We invoke the estimate provided in~\cite{brenner2024c0}, which yields:
\begin{equation*}
    (\mathrm{IV}) = a_h(\Pi_h\bar{y},z_h) - a(\bar{y},E_hz_h)\leq Ch^\tau\norm{z_h}_h.
    \hfill
\end{equation*}
Combining the above bounds completes the proof.
\end{proof}

Based on the estimate \eqref{eqn: estimate_aw_a} in Lemma~\ref{lemma: estimate_aw_a}, we establish the following estimate for the bilinear forms associated with the optimal control problems.
\begin{lemma}\label{lemma: estimate_Aw_A}
There exists a positive constant C, independent of $h$, such that
    \begin{equation}
        \mathcal{A}_w(\Pi_h\bar{y},z_h) - \mathcal{A}(\bar{y},E_hz_h)\leq Ch^\tau\trinorm{z_h}_{\mathcal{H}},\quad\forall z_h\in V_h,\label{eqn: estimate_Aw_A}
    \end{equation}
    where the bilinear form $\mathcal{A}(\cdot,\cdot)$ is defined in \eqref{eqn: defi_A}.
\end{lemma}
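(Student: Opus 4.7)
The plan is to split $\mathcal{A}_w(\Pi_h\bar{y},z_h)-\mathcal{A}(\bar{y},E_hz_h)$ according to the two components of the bilinear forms defined in \eqref{eqn: defi_A_w} and \eqref{eqn: defi_A}, namely the biharmonic part and the tracking (measure) part, and then to treat each separately. Concretely, I would write
\begin{align*}
\mathcal{A}_w(\Pi_h\bar{y},z_h)-\mathcal{A}(\bar{y},E_hz_h)
&=\beta\bigl[a_w(\Pi_h\bar{y},z_h)-a(\bar{y},E_hz_h)\bigr]\\
&\quad+\Bigl[\,\iO(\Pi_h\bar{y})z_h\,d\chi-\iO\bar{y}(E_hz_h)\,d\chi\,\Bigr],
\end{align*}
so that the first bracket is exactly the quantity already controlled by Lemma~\ref{lemma: estimate_aw_a}.

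For the biharmonic part, a direct application of \eqref{eqn: estimate_aw_a} gives a bound of $Ch^\tau\hnorm{z_h}$, and since $\beta\hnorm{z_h}\le\sqrt{\beta}\,\trinorm{z_h}_{\mathcal{H}}$ by the very definition of $\trinorm{\cdot}_{\mathcal{H}}$, this term is bounded by $Ch^\tau\trinorm{z_h}_{\mathcal{H}}$. For the measure term, I would insert $\pm\bar{y}z_h$ and rewrite it as
\begin{equation*}
\iO(\Pi_h\bar{y}-\bar{y})z_h\,d\chi+\iO\bar{y}(z_h-E_hz_h)\,d\chi,
\end{equation*}
and then apply the Cauchy--Schwarz inequality with respect to the Radon measure $\chi$. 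The first piece is estimated by $\norm{\Pi_h\bar{y}-\bar{y}}_{L^2(\Omega;\chi)}\norm{z_h}_{L^2(\Omega;\chi)}\le Ch^{1+\tau}\norm{z_h}_{L^2(\Omega;\chi)}$ using the interpolation estimate \eqref{eqn: Pih_chi}. The second piece is estimated by $\norm{\bar{y}}_{L^2(\Omega;\chi)}\norm{z_h-E_hz_h}_{L^2(\Omega;\chi)}\le Ch(1+|\ln h|)^{1/2}\hnorm{z_h}$ by \eqref{eqn: estimate_z_Ez}, using that $\bar{y}\in H^{2+\alpha}(\Omega)\hookrightarrow C(\bar\Omega)$ so $\norm{\bar{y}}_{L^2(\Omega;\chi)}$ is finite. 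Both upper bounds can be absorbed into $Ch^\tau\trinorm{z_h}_{\mathcal{H}}$, since $\tau<1$ makes the factor $h^{1-\tau}(1+|\ln h|)^{1/2}$ uniformly bounded in $h$.

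The main obstacle here is really bookkeeping rather than analysis: the heavy lifting has already been done in Lemma~\ref{lemma: estimate_aw_a}, so the only genuinely new work is handling the measure term, where one must be careful that the enrichment $E_hz_h-z_h$ is measured against $\chi$ (not $L^2(\Omega)$), so the logarithmic estimate \eqref{eqn: estimate_z_Ez} must be invoked instead of the standard $L^2$ bound \eqref{eqn: Eh_0}. Once this is done and the logarithm is absorbed into the power $h^\tau$, collecting the two contributions yields the claimed bound, completing the proof.
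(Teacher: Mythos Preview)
Your proposal is correct and follows essentially the same route as the paper's own proof: the identical decomposition into the biharmonic part (handled by Lemma~\ref{lemma: estimate_aw_a}) and the measure part (split via $\pm\bar{y}z_h$ and estimated with Cauchy--Schwarz, \eqref{eqn: Pih_chi}, and \eqref{eqn: estimate_z_Ez}). Your remark that $\tau<1$ allows the logarithmic factor to be absorbed into $h^\tau$ is exactly the implicit step the paper is taking when it writes the final bounds directly as $Ch^\tau\trinorm{z_h}_{\mathcal H}$.
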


\begin{proof}
    We begin by expanding the bilinear forms using their definitions:
    \begin{align*}
        \mathcal{A}_w(\Pi_h\bar{y},z_h) - \mathcal{A}(\bar{y},E_hz_h) &= \beta\left[ a_w(\Pi_h\bar{y},z_h) -a(\bar{y},E_hz_h)\right]\\
        & \quad\quad + \int_\Omega(\Pi_h\bar{y}-\bar{y})z_h\,d\chi + \int_\Omega \bar{y}(z_h - E_hz_h)\,d\chi 
    \end{align*}
    The fourth-order term is bounded by \( C h^\tau \trinorm{z_h}_{\mathcal{H}} \) using estimate \eqref{eqn: estimate_aw_a}. For the remaining two terms, we apply the Cauchy-Schwarz inequality and the estimate \eqref{eqn: Pih_chi}:
    \begin{equation*}
        \int_\Omega(\Pi_h\bar{y}-\bar{y})z_h\,d\chi\leq  \norm{\Pi_h\bar{y}-\bar{y}}_{L^2(\Omega,\chi)}\norm{z_h}_{L^2(\Omega,\chi)}\leq Ch^\tau\trinorm{z_h}_\mathcal{H}.
    \end{equation*}
    Since \( \| \bar{y} \|_{L^2(\Omega; \chi)} \leq C_{\bar{y}} < \infty \), by its regularity, and using the estimate \eqref{eqn: estimate_z_Ez} for the operator \( E_h \), we obtain
    \begin{equation*}
        \int_\Omega\bar{y}(z_h-E_hz_h)\,d\chi\leq \norm{\bar{y}}_{L^2(\Omega,\chi)}\norm{z_h-E_hz_h}_{L^2(\Omega,\chi)}\leq Ch^\tau\trinorm{z_h}_\mathcal{H}.
    \end{equation*}
    Combining the above bounds completes the proof.
\end{proof}


\subsection{Convergence Results}\label{subsec: convergence_analysis}

Consequently, we establish the following error estimate, which supports the convergence of our proposed numerical method.
\begin{theorem}\label{thm: aux_error_estimate}
Let \( \bar{y} \) be the optimal state of the continuous optimal control problems \eqref{reduced_cost}, and let \( \bar{y}_h \) be the discrete optimal state of the discrete optimal control problems \eqref{discrete_ocp}. Then, there exists a constant \( C > 0 \), independent of \( h \), such that
    \begin{equation}
        \trinorm{\Pi_h\bar{y}-\bar{y}_h}_{\mathcal{W}}\leq Ch^\tau.\label{eqn: error_estimate_ax}
    \end{equation}
\end{theorem}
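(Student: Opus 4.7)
The plan is to combine coercivity of $\mathcal{A}_w$ with the discrete variational inequality~\eqref{discrete_vi_ocp} and the continuous KKT identity~\eqref{kkt}, packaging all consistency errors through Lemma~\ref{lemma: estimate_Aw_A} and the operator $E_h$ via the estimates in~\eqref{sys: Eh}. First, by the coercivity estimate~\eqref{eqn: coerA}, $C_\dag\trinorm{\Pi_h\bar{y}-\bar{y}_h}_\mathcal{W}^2 \le \mathcal{A}_w(\Pi_h\bar{y}-\bar{y}_h,\Pi_h\bar{y}-\bar{y}_h)$. I would split the right-hand side as $\mathcal{A}_w(\Pi_h\bar{y},\Pi_h\bar{y}-\bar{y}_h)-\mathcal{A}_w(\bar{y}_h,\Pi_h\bar{y}-\bar{y}_h)$ and test~\eqref{discrete_vi_ocp} with the admissible choice $z_h=\Pi_h\bar{y}\in U_h$, which produces the one-sided bound $\mathcal{A}_w(\bar{y}_h,\Pi_h\bar{y}-\bar{y}_h)\ge\int_\Omega y_d(\Pi_h\bar{y}-\bar{y}_h)\,d\chi$.

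Next, I would insert and subtract $\mathcal{A}(\bar{y},E_h(\Pi_h\bar{y}-\bar{y}_h))$. Lemma~\ref{lemma: estimate_Aw_A} directly bounds $\mathcal{A}_w(\Pi_h\bar{y},\Pi_h\bar{y}-\bar{y}_h)-\mathcal{A}(\bar{y},E_h(\Pi_h\bar{y}-\bar{y}_h))$ by $Ch^\tau\trinorm{\Pi_h\bar{y}-\bar{y}_h}_\mathcal{H}$, which is equivalent to the $\mathcal{W}$-norm thanks to the norm equivalence in Lemma~\ref{lemma: norm_equiv}. The continuous KKT identity~\eqref{kkt}, applied with the admissible test $z=E_h(\Pi_h\bar{y}-\bar{y}_h)\in W_h\subset H^2(\Omega)\cap H_0^1(\Omega)$, replaces $\mathcal{A}(\bar{y},E_h(\Pi_h\bar{y}-\bar{y}_h))$ by $\int_\Omega y_d E_h(\Pi_h\bar{y}-\bar{y}_h)\,d\chi+\int_\Omega E_h(\Pi_h\bar{y}-\bar{y}_h)\,d\xi$. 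After cancellation, the residual $\chi$-tracking discrepancy $\int_\Omega y_d\bigl(E_h(\Pi_h\bar{y}-\bar{y}_h)-(\Pi_h\bar{y}-\bar{y}_h)\bigr)\,d\chi$ is handled by Cauchy--Schwarz against $\|y_d\|_{L^2(\Omega;\chi)}$ together with~\eqref{eqn: estimate_z_Ez}, comfortably delivering an $O(h^\tau)$ factor times $\hnorm{\Pi_h\bar{y}-\bar{y}_h}$.

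The main obstacle is the duality term $\int_\Omega E_h(\Pi_h\bar{y}-\bar{y}_h)\,d\xi$, because $\xi$ is only a bounded Borel measure and $E_h\bar{y}_h$ is not pointwise admissible with respect to the obstacles. I would decompose it as $\int_\Omega(E_h\Pi_h\bar{y}-\bar{y})\,d\xi+\int_\Omega(\bar{y}-\bar{y}_h)\,d\xi+\int_\Omega(\bar{y}_h-E_h\bar{y}_h)\,d\xi$. The first and third integrals are controlled by the total variation of $\xi$ together with $L^\infty$ bounds, using~\eqref{eqn: Pih_inf}, \eqref{eqn: Eh_0}--\eqref{eqn: Eh_1}, and a standard two-dimensional inverse estimate to pass from $L^2$ to $L^\infty$; both contribute $O(h^\tau)$. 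For the middle integral, I would invoke the complementarity conditions~\eqref{lagrange1}--\eqref{lagrange3}: on $\{\bar{y}=\psi_-\}$ (supporting $\xi_+$) write $\bar{y}-\bar{y}_h=(\psi_--I_h\psi_-)+(I_h\psi_--I_h\bar{y}_h)+(I_h\bar{y}_h-\bar{y}_h)$, so that the middle piece has the sign favorable to $\xi_+\ge 0$ thanks to $I_h\bar{y}_h\ge I_h\psi_-$ from $\bar{y}_h\in U_h$, while the interpolation remainders are $O(h^\tau)$ in $L^\infty$ using $\psi_\pm\in W^{3,q}(\Omega)$ with $q>2$ and another inverse estimate in $V_h$. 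The symmetric treatment on $\{\bar{y}=\psi_+\}$ yields the same bound against $\xi_-$. This is the only place where the regularity of the obstacles and the measure-valued multiplier interact directly, and it follows the blueprint of~\cite{brenner2024c0} adapted to the weak Galerkin framework.

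Combining all bounds yields an estimate of the form $\trinorm{\Pi_h\bar{y}-\bar{y}_h}_\mathcal{W}^2\le Ch^\tau\trinorm{\Pi_h\bar{y}-\bar{y}_h}_\mathcal{W}$, from which~\eqref{eqn: error_estimate_ax} follows after dividing (or equivalently applying Young's inequality to absorb any residual factor of $\trinorm{\Pi_h\bar{y}-\bar{y}_h}_\mathcal{W}$ into the left-hand side).
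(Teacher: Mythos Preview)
Your architecture is identical to the paper's: coercivity~\eqref{eqn: coerA}, the discrete variational inequality~\eqref{discrete_vi_ocp} tested with $z_h=\Pi_h\bar y\in U_h$, Lemma~\ref{lemma: estimate_Aw_A} to pass to $\mathcal A(\bar y,E_h\cdot)$, the $y_d$ residual via~\eqref{eqn: estimate_z_Ez}, the KKT identity~\eqref{kkt} with $z=E_h(\Pi_h\bar y-\bar y_h)$, and the multiplier term $\int_\Omega E_h(\Pi_h\bar y-\bar y_h)\,d\xi$, which the paper simply delegates to~\cite[Section~4.2]{brenner2024c0} and records as $\le Ch^{2\tau}+Ch^\tau\trinorm{\Pi_h\bar y-\bar y_h}_{\mathcal H}$, before closing with Young's inequality and the norm equivalence~\eqref{eqn: norm_equiv}.

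The one slip is in your accounting for the multiplier term. In your three-term split, the first and third integrals $\int_\Omega(E_h\Pi_h\bar y-\bar y)\,d\xi$ and $\int_\Omega(\bar y_h-E_h\bar y_h)\,d\xi$ are \emph{absolute} contributions (no factor of the error norm), yet you tag them as ``$O(h^\tau)$'' and then write a final inequality $\trinorm{\cdot}_{\mathcal W}^2\le Ch^\tau\trinorm{\cdot}_{\mathcal W}$ with no absolute term at all. Taken literally, an $O(h^\tau)$ absolute piece on the right of a squared-norm bound only delivers $\trinorm{\cdot}_{\mathcal W}\le Ch^{\tau/2}$. The cited argument produces the absolute part at $O(h^{2\tau})$: one keeps $(E_h-I)$ acting on the \emph{full} difference $\Pi_h\bar y-\bar y_h$, so that its $L^\infty$ bound carries the factor $h\,\hnorm{\Pi_h\bar y-\bar y_h}$ (hence of type $h^\tau\trinorm{\cdot}_{\mathcal H}$ after Young), while the genuinely absolute remainders on the coincidence set come from $\|\Pi_h\bar y-\bar y\|_{L^\infty(\Omega)}\le Ch^{1+\tau}$ via~\eqref{eqn: Pih_inf} and $\|\psi_\pm-I_h\psi_\pm\|_{L^\infty(\Omega)}\le Ch^2$ via~\eqref{state1}, both $\le Ch^{2\tau}$ since $\tau<1$. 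With that correction your final inequality reads $\trinorm{\cdot}_{\mathcal W}^2\le Ch^{2\tau}+Ch^\tau\trinorm{\cdot}_{\mathcal H}$, exactly as in the paper, and the conclusion follows.
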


\begin{proof}
    From the coercivity \eqref{eqn: coerA} of \( \mathcal{A}_w(\cdot, \cdot) \), the definition of the bilinear form, and the discrete variational inequality~\eqref{discrete_vi_ocp}, we have
    \begin{align*}
        C\trinorm{\Pi_h\bar{y}-\bar{y}_h}_\mathcal{W}^2 &\leq \mathcal{A}_w(\Pi_h\bar{y}-\bar{y}_h,\Pi_h\bar{y} - \bar{y}_h)\\
        &\leq \underbrace{\mathcal{A}_w(\Pi_h\bar{y},\Pi_h\bar{y}-\bar{y}_h)}_{(\mathrm{I})}+\underbrace{\left[-\int_\Omega y_d(\Pi_h\bar{y}-\bar{y}_h)\,d\chi\right]}_{(\mathrm{II})}.
    \end{align*}
We estimate the terms \((\mathrm{I})\) and \((\mathrm{II})\) separately.

\vskip 5pt
    \noindent \textbf{Estimate of \((\mathrm{I})\):}  
    Choosing \( z_h = \Pi_h \bar{y} - \bar{y}_h \) in Lemma~\ref{lemma: estimate_Aw_A} gives
    \begin{align*}
        \mathcal{A}_w(\Pi_h\bar{y},\Pi_h\bar{y}-\bar{y}_h)\leq \mathcal{A}(\bar{y},E_h(\Pi_h\bar{y}-\bar{y}_h)) + Ch^\tau\trinorm{\Pi_h\bar{y}-\bar{y}_h}_\mathcal{H}.
    \end{align*}
  
    \vskip 5pt
    \noindent \textbf{Estimate of \((\mathrm{II})\):}  
    Since \( \| y_d \|_{L^2(\Omega; \chi)} \leq C_{y_d} < \infty \), and using the estimate \eqref{eqn: estimate_z_Ez}, we obtain
    \begin{align*}
        \int_\Omega y_d (E_hz_h - z_h)\,d\chi\leq \norm{y_d}_{L^2(\Omega,\chi)}\norm{E_hz_h-z_h}_{L^2(\Omega,\chi)}\leq Ch^\tau\trinorm{z_h}_\mathcal{H}.
    \end{align*}
    Choosing \( z_h = \Pi_h \bar{y} - \bar{y}_h \), we bound \((\mathrm{II})\) as
    \begin{equation*}
        -\int_\Omega y_d(\Pi_h\bar{y}-\bar{y}_h)\,d\chi\leq -\int_\Omega y_d(E_h(\Pi_h\bar{y}-\bar{y}_h))\,d\chi+Ch^\tau\trinorm{\Pi_h\bar{y}-\bar{y}_h}_\mathcal{H}.
    \end{equation*}

    Combining the two estimates, we obtain
    \begin{align*}
        (\mathrm{I}) + (\mathrm{II}) &\leq Ch^\tau\trinorm{\Pi_h\bar{y} - \bar{y}_h}_\mathcal{H} + \underbrace{\mathcal{A}(\bar{y},E_h(\Pi_h\bar{y}-\bar{y}_h))  -\int_\Omega y_d(E_h(\Pi_h\bar{y}-\bar{y}_h))\,d\chi}_{(\mathrm{III})}.
    \end{align*}
    \vskip 5pt
\noindent \textbf{Estimate of \((\mathrm{III})\):}  
By choosing \( z = E_h(\Pi_h \bar{y} - \bar{y}_h) \) in the KKT conditions~\eqref{kkt} with \eqref{eqn: defi_A}, and applying the estimates from~\cite[Section 4.2]{brenner2024c0}, we obtain:
    \begin{align*}
        \mathrm{(III)}&=\mathcal{A}(\bar{y},E_h(\Pi_h\bar{y}-\bar{y}_h))  -\int_\Omega y_d(E_h(\Pi_h\bar{y}-\bar{y}_h))\,d\chi\\
        &= \int_\Omega E_h (\Pi_h\bar{y} - \bar{y}_h)\,d\xi_- +\int_\Omega E_h (\Pi_h\bar{y} - \bar{y}_h)\,d\xi_+\\
        & \leq Ch^{2\tau} + Ch^\tau\trinorm{\Pi_h\bar{y}-\bar{y}_h}_\mathcal{H},
    \end{align*}
    where $\xi$ is the regular Borel measure defined in~\eqref{lagrange}.

    Finally, applying Young's inequality with a constant \( \alpha > 0 \) such that \( \alpha < 1/C_* \), and using the norm equivalence \eqref{eqn: norm_equiv}, we obtain
    \begin{align*}
        \trinorm{\Pi_h\bar{y} - \bar{y}_h}_{\mathcal{W}}^2 &\leq Ch^{2\tau} + C_*h^\tau\trinorm{\Pi_h\bar{y}-\bar{y}_h}_\mathcal{H}\\
        &\leq Ch^{2\tau} + C_*\left(\frac{h^{2\tau}}{2\alpha} + \frac{\alpha}{2}\trinorm{\Pi_h\bar{y}-\bar{y}_h}_\mathcal{W}^2\right).
    \end{align*}
    Choosing \( \alpha \) sufficiently small so that the last term can be absorbed into the left-hand side yields the desired estimate:
    \begin{equation*}
        \trinorm{\Pi_h\bar{y} - \bar{y}_h}_{\mathcal{W}}\leq Ch^\tau.
        \hfill \qedhere
    \end{equation*}  
\end{proof}

Furthermore, we present the total error estimate, which ensures the convergence of our proposed numerical method.
\begin{theorem}
    Let \( \bar{y} \) be the optimal state of the continuous problems and \( \bar{y}_h \) be the discrete optimal state. Then, there exists a constant \( C > 0 \), independent of \( h \), such that
    \begin{equation}
        \trinorm{\bar{y}-\bar{y}_h}_{\mathcal{W}}\leq Ch^\tau.
    \end{equation}
\end{theorem}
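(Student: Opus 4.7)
The plan is to apply the triangle inequality
$$\trinorm{\bar{y} - \bar{y}_h}_{\mathcal{W}} \leq \trinorm{\bar{y} - \Pi_h\bar{y}}_{\mathcal{W}} + \trinorm{\Pi_h\bar{y} - \bar{y}_h}_{\mathcal{W}}$$
and handle the two pieces separately. The second piece is already controlled by Theorem~\ref{thm: aux_error_estimate}, which yields $\trinorm{\Pi_h\bar{y} - \bar{y}_h}_{\mathcal{W}} \leq Ch^\tau$. Thus the entire task reduces to establishing the interpolation estimate $\trinorm{\bar{y} - \Pi_h\bar{y}}_{\mathcal{W}} \leq Ch^\tau$ for the Lagrange nodal interpolant.

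For the $L^2(\Omega;\chi)$ component of the triple norm, I would invoke \eqref{eqn: Pih_chi} directly, which gives $\|\bar{y} - \Pi_h\bar{y}\|_{L^2(\Omega;\chi)} \leq Ch^{1+\tau}$, well within $O(h^\tau)$. The jump portion of $\wnorm{\bar{y} - \Pi_h\bar{y}}^2$ simplifies using $\jump{\nabla\bar{y}}\cdot\bn_e = 0$, since $\bar{y}\in H^2(\Omega)$; hence
$$\norm{h_e^{-1/2}\jump{\nabla(\bar{y}-\Pi_h\bar{y})}\cdot\bne}_{0,\Eho} = \norm{h_e^{-1/2}\jump{\nabla\Pi_h\bar{y}}\cdot\bne}_{0,\Eho} \leq \hnorm{\bar{y} - \Pi_h\bar{y}} \leq Ch^\tau$$
by \eqref{eqn: Phi_h}.

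The main work is bounding $\norm{\Delta\bar{y} - \Delta_w\Pi_h\bar{y}}_{0,\Th}$. I would split this via a second triangle inequality into $\norm{\Delta(\bar{y} - \Pi_h\bar{y})}_{0,\Th} + \norm{\Delta\Pi_h\bar{y} - \Delta_w\Pi_h\bar{y}}_{0,\Th}$. The first contribution is part of $\hnorm{\bar{y} - \Pi_h\bar{y}}$ and is of order $h^\tau$ by \eqref{eqn: Phi_h}. For the second, I plan to exploit the key identity \eqref{eqn: relation1} by testing it against $p_0 = \Delta_w\Pi_h\bar{y} - \Delta\Pi_h\bar{y} \in P_0(\Th)$, then apply Cauchy--Schwarz together with the inverse trace inequality on piecewise constants. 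This yields
$$\norm{\Delta_w\Pi_h\bar{y} - \Delta\Pi_h\bar{y}}_{0,\Th}^2 \leq C\,\norm{h_e^{-1/2}\jump{\nabla\Pi_h\bar{y}}\cdot\bne}_{0,\Eho}\,\norm{\Delta_w\Pi_h\bar{y} - \Delta\Pi_h\bar{y}}_{0,\Th},$$
so that after cancelling one factor and applying the previously derived jump bound, the quantity is controlled by $Ch^\tau$.

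The main obstacle, such as it is, lies in this last step: transferring between $\Delta\Pi_h\bar{y}$ and $\Delta_w\Pi_h\bar{y}$. Identity \eqref{eqn: relation1} expresses their difference through jumps of $\nabla\Pi_h\bar{y}$, which become small only after inserting the vanishing jump of $\nabla\bar{y}$. Once this observation is in place, the remainder is routine, and combining the bounds with Theorem~\ref{thm: aux_error_estimate} delivers $\trinorm{\bar{y} - \bar{y}_h}_{\mathcal{W}} \leq Ch^\tau$, completing the proof.
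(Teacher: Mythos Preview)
Your proof is correct and follows essentially the same route as the paper: triangle inequality, then Theorem~\ref{thm: aux_error_estimate} for the second piece, and the interpolation estimates \eqref{eqn: Pih_chi} and \eqref{eqn: Phi_h} for the first. The only difference is cosmetic: the paper bounds $\trinorm{\bar{y}-\Pi_h\bar{y}}_{\mathcal{W}}$ in one stroke by citing the norm comparison \eqref{eqn: norm_bd} (tacitly extended from $H^2(\Omega)\cap H_0^1(\Omega)$ to piecewise $H^2$ functions via the argument of Lemma~\ref{lemma: norm_equiv}), whereas you unpack that comparison explicitly for this particular function using identity \eqref{eqn: relation1}---in effect reproving the relevant half of Lemma~\ref{lemma: norm_equiv} in place.
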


\begin{proof}
   We begin by applying the triangle inequality and the norm boundedness \eqref{eqn: norm_bd}:
        \begin{align*}
        \trinorm{\bar{y} - \bar{y}_h}_{\mathcal{W}}\leq C\trinorm{\bar{y}-\Pi_h\bar{y}}_\mathcal{H} + \trinorm{\Pi_h\bar{y}-\bar{y}_h}_\mathcal{W}
    \end{align*}
    The first term is bounded using \eqref{eqn: Phi_h} and \eqref{eqn: Pih_chi}:
    \begin{equation*}
        \trinorm{\bar{y}-\Pi_h\bar{y}}_\mathcal{H}^2 = \beta\hnorm{\bar{y}-\Pi_h\bar{y}}^2 + \norm{\bar{y}-\Pi_h\bar{y}}^2_{L^2(\Omega,\chi)}\leq Ch^{2\tau}.
    \end{equation*}
    Combining this with the error estimate~\eqref{eqn: error_estimate_ax} in Theorem~\ref{thm: aux_error_estimate}, we conclude that
    \begin{equation*}
        \trinorm{\bar{y} - \bar{y}_h}_{\mathcal{W}}\leq Ch^\tau. 
        \hfill \qedhere
    \end{equation*}
\end{proof}


\section{An Additive Schwarz Preconditioner}\label{sec: preconditioning}

To improve the efficiency of our method, we develop and analyze a one-level additive Schwarz preconditioner for the discrete system in this section, following the framework of~\cite{brenner2022additive}.
We use the notation $\lesssim$ to denote an inequality up to a constant factor. Specifically, for two quantities $A$ and $B$, the expression $A\lesssim B$ means that there exists a constant $C>0$, independent of discretization parameters (e.g., mesh size $h$, overlap size $\delta$, or problem data), such that $A\leq CB$.

We define an operator $\mathsf{A}_h$ on $V_h \to V_h'$ by
$$\langle \mathsf{A}_h u, v \rangle = a_w (u, v), \quad \forall u, v \in V_h.$$
Our goal is to develop a one-level additive Schwarz preconditioner for $\mathsf{A}_h$.
By the coercivity property \eqref{eqn: coera}, we have the following identity:
\begin{equation}\label{eq:normequivAh}
    \langle \mathsf{A}_h v, v \rangle = \wnorm{v}^2, \quad \forall v \in V_h.
\end{equation}

We assume that the domain $\Omega$ is partitioned into overlapping subdomains $\Omega_1, \dots, \Omega_J$ such that $\Omega = \cup_{j=1}^J \Omega_j$, each with diameter $\text{diam}(\Omega_j) \approx H$, and that the boundaries of $\Omega_j$ align with the mesh $\Th$. The overlap among the subdomains is characterized by a parameter $\delta$, and we assume the existence of nonnegative functions $\theta_1, \dots, \theta_J \in C^{\infty} (\bar{\Omega})$ satisfying
\begin{enumerate}
\item[i.] $\theta_j = 0$ on $\Omega \setminus \Omega_j$,
    \item[ii.] $\displaystyle \sum_{j=1}^J \theta_j = 1$ on $\bar{\Omega}$, 
    \item[iii.] $\|\nabla \theta_j \|_{L^{\infty}(\Omega)} \lesssim \delta^{-1}$ and $ \|D^2 \theta_j \|_{L^{\infty}(\Omega)} \lesssim \delta^{-2}$,
\end{enumerate}
where $D^2 \theta_j$ denotes the Hessian of $\theta_j$. Moreover, we assume
\begin{equation}\label{Assump:OneLevelASP}
    \textbf{Overlap assumption: }\text{any point in $\Omega$ belongs to at most $N_c$ subdomains.}
\end{equation}

\begin{remark}
    The construction of $\theta_j$ is standard \cite{Rudin1991Functional}. Given a coarse partition of $\Omega$ consisting of convex elements, such as quadrilaterals in two dimensions or hexaherdra in three dimensions, we define $\Omega_j$, by enlarging each coarse element by an amount of $\delta$, such that each $\Omega_j$ becomes a union of fine mesh elements in $\cT_h$.
\end{remark}
For each subdomain $\Omega_j$, define the local finite element space
$$V_j = \left\{ v \in V_h \,:\, v=0 \text{ on } \Omega \setminus \Omega_j \right\},$$ 
and define the local operator $\mathsf{A}_{j} : V_j \rightarrow V_j'$ by
$$\langle \mathsf{A}_j u_j, v_j \rangle = a_{w,j} (u_j, v_j), \quad \forall u_j, v_j \in V_j,$$
where 
$$    a_{w,j}(u_j,v_j) = (\Delta _w u_j,\Delta_w v_j)_{\mathcal{T}_{h,j}} + \frac{1}{4}\langle h_e^{-1}\jump{\nabla u_j}\cdot\bne,\jump{\nabla v_j}\cdot\bne\rangle_{\mathcal{E}_{h,j}},
$$
with
\begin{equation*}
 \mathcal{T}_{h,j} = \left\{ T \in \Th \,:\, T \subset \Omega_j \right\}\quad\text{and}\quad   \mathcal{E}_{h,j} =\left\{ e \in \Eh \,:\, e \subset \bar{\Omega}_j \setminus \partial \Omega \right\}.
\end{equation*}
We define the local energy norm on $V_j$ by
\begin{equation}\label{eq:normequivAj}
    \norm{v_j}_{w,j}^2 := a_{w,j} (v_j, v_j), \quad \forall v_j \in V_j,
\end{equation}
which induces a norm on $V_j$.

The one-level additive Schwarz preconditioner \( \mathsf{B}_h : V_h' \rightarrow V_h \) is defined by
\[
\mathsf{B}_h = \sum_{j=1}^J \mathsf{I}_j \mathsf{A}_j^{-1} \mathsf{I}_j^\top,
\]
where \( \mathsf{I}_j : V_j \rightarrow V_h \), for \( 1 \leq j \leq J \), denotes the natural injection operator, and \( \mathsf{I}_j^\top : V_h' \rightarrow V_j' \) is its transpose (i.e., the adjoint operator with respect to the duality pairing).
We now turn to estimating the condition number of the preconditioned system \( \mathsf{B}_h \mathsf{A}_h \). The following theorem provides the condition number estimate.

\begin{theorem}
    It holds that 
    \begin{equation}\label{eq:condnumesti}
        \kappa (\mathsf{B}_h \mathsf{A}_h) := \frac{\lambda_{\max} (\mathsf{B}_h \mathsf{A}_h)}{\lambda_{\min} (\mathsf{B}_h \mathsf{A}_h)} \lesssim \delta^{-4},
    \end{equation}
    where $\lambda_{\max}(\mathsf{B}_h \mathsf{A}_h)$ and $\lambda_{\min}(\mathsf{B}_h \mathsf{A}_h)$ denote the largest and smallest eigenvalues of $\mathsf{B}_h \mathsf{A}_h$, respectively.
\end{theorem}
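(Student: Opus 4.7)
The plan is to invoke the abstract additive Schwarz framework (as used in~\cite{brenner2022additive} for $C^0$-IP biharmonic discretizations), which reduces the condition number estimate to two standard ingredients: a stable decomposition of every $v \in V_h$ into local components $v_j \in V_j$ with a quantitative constant $C_0$, and a finite-overlap/local stability bound. The abstract theory then yields
$$\lambda_{\min}(\mathsf{B}_h\mathsf{A}_h) \gtrsim C_0^{-2}, \qquad \lambda_{\max}(\mathsf{B}_h\mathsf{A}_h) \lesssim N_c,$$
so that \eqref{eq:condnumesti} follows once we verify $C_0^2 \lesssim \delta^{-4}$.

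The upper bound on $\lambda_{\max}$ is the routine direction: for any additive decomposition $v = \sum_{j=1}^J v_j$ with $v_j \in V_j$, the identities \eqref{eq:normequivAh} and \eqref{eq:normequivAj}, together with the finite overlap assumption \eqref{Assump:OneLevelASP} and the Cauchy--Schwarz inequality, give $\wnorm{v}^2 \leq N_c \sum_{j=1}^J \norm{v_j}_{w,j}^2$, which is exactly the estimate required by the abstract theory.

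The substantive step is to produce the stable decomposition. Given $v \in V_h$, I would set $v_j := \Pi_h(\theta_j v) \in V_j$, where $\Pi_h$ is the Lagrange nodal interpolant onto $V_h$. Since $\sum_{j=1}^J \theta_j \equiv 1$ on $\bar\Omega$ and $\Pi_h$ is nodal, the relation $\sum_{j=1}^J v_j = \Pi_h v = v$ holds automatically. To estimate $\sum_{j=1}^J \norm{v_j}_{w,j}^2$, I would first apply the norm equivalence in Lemma~\ref{lemma: norm_equiv} to pass from $\wnorm{\cdot}$ (and its local counterpart $\norm{\cdot}_{w,j}$) to the $C^0$-IP-style norm $\hnorm{\cdot}$. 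This reduces the problem to bounding $\sum_{j=1}^J \hnorm{\Pi_h(\theta_j v)}_{h,j}^2$, i.e.\ to the same stability estimate already needed in the $C^0$-IP setting of~\cite{brenner2022additive}, whose conclusion can then be transported back through the norm equivalence.

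The principal technical obstacle will be controlling the derivatives generated by multiplication with the partition of unity. By the product rule, $D^2(\theta_j v)$ contains the three types of terms $v\,D^2\theta_j$, $\nabla\theta_j \otimes \nabla v$, and $\theta_j\,D^2 v$, and the bounds $\norm{\nabla\theta_j}_{L^\infty(\Omega)} \lesssim \delta^{-1}$ and $\norm{D^2\theta_j}_{L^\infty(\Omega)} \lesssim \delta^{-2}$ introduce factors of $\delta^{-1}$ and $\delta^{-2}$ respectively, together with similar contributions arising on the jump portion of $\hnorm{\cdot}$ via the trace inequality~\eqref{eqn: trace}. To absorb the lower-order norms $\norm{v}_{0}$ and $\snorm{v}_{1}$ that appear on the overlap strips where $\theta_j$ varies, I would invoke Poincar\'e-type inequalities on those strips (using that $v$ has zero trace on $\partial\Omega$ or on the boundaries of the relevant subregion), which consumes two powers of $\delta$ per derivative that is ``traded.'' After summing over $j$ and using the finite overlap property once more, the dominant factor is $\norm{D^2\theta_j}_{L^\infty(\Omega)}^2 \lesssim \delta^{-4}$, giving $\sum_{j=1}^J \norm{v_j}_{w,j}^2 \lesssim \delta^{-4}\wnorm{v}^2$. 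This yields $C_0^2 \lesssim \delta^{-4}$ and hence \eqref{eq:condnumesti}.
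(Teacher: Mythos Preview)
Your proposal is correct and follows essentially the same route as the paper: the same abstract additive Schwarz framework, the same decomposition $v_j=\Pi_h(\theta_j v)$, the same use of the norm equivalence in Lemma~\ref{lemma: norm_equiv} to pass to the $\hnorm{\cdot}$-norm, and the same product-rule bookkeeping producing the $\delta^{-2}$ and $\delta^{-4}$ factors. The only minor deviation is in the last step: the paper absorbs the lower-order terms $\|v\|_0^2$ and $|v|_1^2$ via the \emph{global} Poincar\'e--Friedrichs inequalities of~\cite{brenner2004poincare} (so the dominant factor $\delta^{-4}$ comes directly from $\|D^2\theta_j\|_{L^\infty}^2$), whereas you invoke strip-localized Poincar\'e inequalities; the latter is what one would need for the sharper $H^{-1}\delta^{-3}$ bound noted in the Remark following the theorem, but for the stated $\delta^{-4}$ estimate the global version is simpler and suffices.
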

\begin{proof}
    First, let $v \in V_h$ be arbitrary. For any decomposition $v = \sum_{j=1}^J \mathsf{I}_jv_j$ with $v_j\in V_j$, we apply
    \eqref{eq:normequivAh}, \eqref{Assump:OneLevelASP}, Cauchy-Schwarz inequality, and \eqref{eq:normequivAj} to obtain
    $$\langle \mathsf{A}_h v, v \rangle = \norm{v}_w^2 \leq \sum_{j=1}^J \norm{\mathsf{I}_j v_j}_w^2 \lesssim \sum_{j=1}^J \norm{v_j}_{w,j}^2 = \sum_{j=1}^J \langle \mathsf{A}_{j} v_j, v_j \rangle,$$
    which implies
    $$\langle \mathsf{A}_h v, v \rangle \lesssim \min_{\substack{v=\sum_{j=1}^J \mathsf{I}_j v_j \\ v_j \in V_j}} \langle \mathsf{A}_{j} v_j, v_j \rangle .$$
    Therefore, by the standard theory of the additive Schwarz preconditioners \cite{Brenner2008TheMathematical,mathew2008domain,bjorstad1991spectra,toselli2004domain,smith2004domain}, we conclude that
    \begin{equation}\label{eq:PreCondLambdaMax}
        \lambda_{\max} (\mathsf{B}_h \mathsf{A}_h) \lesssim 1.
    \end{equation}
    
    Next, we estimate $\lambda_{\min} (\mathsf{B}_h \mathsf{A}_h)$.
    Let $v\in V_h$ be arbitrary, and define
    \begin{equation}\label{def:vj}
        v_j = \Pi_h (\theta_j v), \quad 1 \leq j \leq J,
    \end{equation}
    where $\Pi_h$ is the Lagrange nodal interpolation operator onto $V_h$.
    Then, by the properties of $\theta_j$, we have $v_j \in V_j$, and 
    $$\sum_{j=1}^J v_j = \sum_{j=1}^J \Pi_h (\theta_j v) = \Pi_h \left( \sum_{j=1}^J \theta_j \right) v = \Pi_h v = v.$$
    The interpolation operator also has the following property in each $T\in\Th$~\cite{brenner2005twoleveladditive}:
    \begin{equation}\label{eqn: standard_interpolation_estimates}
         |\Pi_h (\theta_j v)|_{2,T}\lesssim|\theta_j v|_{2,T}.
    \end{equation}
    Then, using the same argument in the proof of Lemma~\ref{lemma: norm_equiv}, we have
    \begin{equation}\label{eq:Ajvj}
        \langle \mathsf{A}_j v_j, v_j \rangle = \norm{v_j}_w^2 \lesssim 
        \sum_{T\in \mathcal{T}_{h,j}} \snorm{v_j}_{2,T}^2 + \sum_{e\in \cE_{h,j}}  \norm{ h_e^{-1/2}\jump{\nabla v_j}\cdot\bne }_{0,e}^2.
    \end{equation}
    For any element $T\in \mathcal{T}_{h,j}$ in \eqref{eq:Ajvj}, by the definition \eqref{def:vj}, the interpolation property \eqref{eqn: standard_interpolation_estimates}, and the bounds on $\theta_j$, we obtain
    \begin{eqnarray*}
        |v_j|_{2,T}^2 &=& |\Pi_h (\theta_j v)|_{2,T}^2\\
        &\lesssim& |\theta_j v|_{2,T}^2 \\
        &\lesssim& \|\theta_j\|_{L^\infty(T)}^2 |v|_{2,T}^2 + \|\nabla \theta_j \|_{L^\infty(T)}^2 |v|_{1,T}^2 + \|D^2 \theta_j \|_{L^\infty(T)}^2 \|v\|_{0,T}^2 \\
        &\lesssim& |v|_{2,T}^2 + \frac{1}{\delta^2} |v|_{1,T}^2 + \frac{1}{\delta^4} \|v\|_{0,T}^2.
    \end{eqnarray*}
    For any interface $e \in \cE_{h,j}$ in~\eqref{eq:Ajvj}, we estimate using the definition of $v_j$~\eqref{def:vj}, the trace inequality~\eqref{eqn: trace}, the interpolation estimates~\eqref{sys: Pih}, and the bounds on $\theta_j$,

    \begin{align*}
        \| h_e^{-1/2}\jump{\nabla v_j}\cdot\bne \|_{0,e}^2 &=  \| h_e^{-1/2} \jump{\nabla (\Pi_h (\theta_j v))}\cdot\bne \|_{0,e}^2\\
        &\lesssim   \|h_e^{-1/2} \jump{ \nabla (\Pi_h (\theta_j v) - \theta_j v ) }\cdot\bne \|_{0,e}^2 +  \| h_e^{-1/2}\jump{\nabla (\theta_j v)}\cdot\bne \|_{0,e}^2\\
        &\lesssim \sum_{T\in \cT_e} \left[ h_T^{-2} |\Pi_h (\theta_j v) - \theta_j v|_{1,T}^2 + |\Pi_h (\theta_j v) - \theta_j v|_{2,T}^2 \right] +  \| h_e^{-1/2}\jump{\nabla v}\cdot\bne \|_{0,e}^2\\
        &\lesssim \sum_{T\in \cT_e} |\theta_j v|_{2,T}^2 + \| h_e^{-1/2}\jump{\nabla v}\cdot\bne \|_{0,e}^2\\
        &\lesssim \sum_{T\in \cT_e} \left[ |v|_{2,T}^2 + \frac{1}{\delta^2} |v|_{1,T}^2 + \frac{1}{\delta^4} \|v\|_{0,T}^2 \right] + \| h_e^{-1/2}\jump{\nabla v}\cdot\bne \|_{0,e}^2 ,
    \end{align*}
    where $\cT_e$ denotes the set of elements in $\cT_h$ sharing the interface $e$.
    Combining the above results for \eqref{eq:Ajvj},
    summing over $j$, and applying the overlap assumption \eqref{Assump:OneLevelASP} along with the Poincar\'{e}-Friedrichs inequalities~\cite{brenner2004poincare} and the norm equivalence~\eqref{eqn: norm_equiv}, we obtain
    \begin{align*}
        \sum_{j=1}^J \langle  \mathsf{A}_j v_j, v_j \rangle &\lesssim \sum_{T\in \cT_h} \left[ |v|_{2,T}^2 + \frac{1}{\delta^2} |v|_{1,T}^2 + \frac{1}{\delta^4} \|v\|_{0,T}^2 \right] + \| h_e^{-1/2}\jump{\nabla v}\cdot\bne \|_{0,\Eho}^2\\
        &\lesssim \frac{1}{\delta^4} \langle \mathsf{A}_h v, v \rangle.
    \end{align*}
    Hence, the standard theory of additive Schwarz preconditioners yields
    \begin{equation}\label{eq:PreCondLambdaMin}
        \lambda_{\min} (\mathsf{B}_h \mathsf{A}_h) \gtrsim \delta^4.
    \end{equation}
    Therefore, combining \eqref{eq:PreCondLambdaMax} and \eqref{eq:PreCondLambdaMin}, the condition number estimate \eqref{eq:condnumesti} follows.
\end{proof}

\begin{remark}
    Using arguments similar to those in \cite{brenner2005twoleveladditive}, if the subdomains $\Omega_j, 1\leq j\leq J$, are shape regular, then the estimate \eqref{eq:condnumesti} can be improved to
    \begin{equation}
        \kappa (\mathsf{B}_h \mathsf{A}_h) \lesssim H^{-1} \delta^{-3}.
    \end{equation}
\end{remark}


 \section{Numerical Experiments}\label{sec: numerical_experiment}
 
 This section presents a series of numerical experiments to demonstrate the accuracy, efficiency, and robustness of the proposed $C^0$-WG method and the additive Schwarz preconditioner. We begin by applying the method to a biharmonic problem and a constrained optimal control problem that involves general tracking terms and pointwise state constraints. To evaluate solver performance, we compare the convergence behavior of the $C^0$-WG method with that of the $C^0$-IP method in various norms and examine the condition numbers of the resulting linear systems, both with and without preconditioning.

 
 \subsection{Biharmonic Problem}\label{subsec: nume_sim_biharnomic}
Let the computational domain be $\Omega=(-0.5,0.5)^2$.
We consider the biharmonic problem~\eqref{sys:governing} with homogeneous boundary conditions:
\begin{subequations}\label{sys:model}
\begin{alignat*}{2}
\Delta^2 u & = f\quad\text{ in }\Omega,\\
u = \Delta u &=0\quad\text{ on } \partial\Omega.
\end{alignat*}
\end{subequations}
The exact solution is given by
\begin{equation}
u = 10\sin\left(\pi(x+0.5)\right)\sin\left(\pi(y+0.5)\right).
\end{equation}

Table~\ref{table: bih_wL} shows the error measured in the mesh-dependent norm $\norm{u_I-u_h}_w$,
where $u_h$ denotes the numerical solution and 
$u_I$ is the $P_2$ Lagrange interpolant of the exact solution $u$.
Our theoretical results show that the $C^0$-WG method is expected to achieve first-order convergence in the mesh-dependent norm. The table demonstrates that the $C^0$-WG method attains a first-order or higher convergence order, where the observed superconvergence may be attributed to the use of a uniform mesh or the smoothness of the exact solution.
This observation is consistent with the fact that the $C^0$-IP method also exhibits first-order convergence.
\begin{table}[h!]
\center
\footnotesize
\begin{tabular}[h!]{|c||c|c|c|c|c|c|}
\hline 
$h$ & $2^{-3}$& $2^{-4}$& $2^{-5}$& $2^{-6}$& $2^{-7}$& $2^{-8}$\\
\hline \hline
$\norm{u_I - u_h}_{w}$ & 1.16e+1 &3.78e+0&1.27e+0&4.38e-1&1.53e-1&5.36e-2\\
\hline 
Order&-&1.62&1.57&1.54&1.52&1.51\\
\hline
\end{tabular}
\caption{Mesh refinement study for the $C^0$-WG method based on the error between $u_I$ and $u_h$.}
\label{table: bih_wL}
\end{table}


\subsection{Constrained Optimal Control Problem with General Tracking}

Let $\Omega =[-4,4]\times[-4,4]$ and $\beta = 1$.
We consider the optimal control problem introduced in~\cite{brenner2024c0}:
\begin{equation}\label{eg: ocp}
\min_{y\in K}\frac1{2}\left[\|y-y_d\|_{0}^2
+\sum_{j=1}^4w_0(p_j)[y(p_j)-y_0(p_j)]^2+\beta\|\Delta y\|^2_{0}\right],
\end{equation}
where the points are given by
\[
p_1 = (-2.5, \;-2.5),\quad p_2=(2.5,\;-2.5),\quad p_3 = (2.5,\;2.5),\quad p_4=(-2.5,\;2.5),
\]
with weights $w_0(p_j)=100$ for $j=1,\dots,4$, and the admissible set is
\[
K=\{ y\in H^2(\Omega)\cap H^1_0(\Omega)\,:\,y\leq \psi\text{ in }\Omega\}.
\]
The pointwise state constraint is given by $\psi(\bx) = |\bx|^2-1$.
A construction of the exact solution can be found in~\cite{brenner2024c0}.

To see the convergence order in various norms, we compute the relative error $e$ defined as:
\begin{equation*}
     e_\Diamond = \frac{\norm{v-v_h}_\Diamond}{\norm{v}_\Diamond},
\end{equation*}
where $\Diamond=L^\infty(\Omega)$, $L^2(\Omega)$, $H^1(\Omega)$, $w$, or $h$.
We compare numerical results obtained using the $C^0$-WG and $C^0$-IP methods.
As shown in Table~\ref{table: ocp_wL}, the $C^0$-WG method consistently produces more accurate approximations than the $C^0$-IP method across all tested norms.
In particular, the $C^0$-WG method achieves higher convergence orders in the $L^\infty$, $L^2$, and $H^1$ norms.
These results highlight the $C^0$-WG method as a more accurate and computationally efficient alternative to the $C^0$-IP method, especially for constrained optimal control problems with general tracking.
\begin{table}[h!]
\centering
\footnotesize
\begin{tabular}[h!]{|c||c|c||c|c||c|c||c|c|}
\hline
 &  \multicolumn{8}{c|}{$C^0$-WG method}\\
\cline{2-9}
$h$ & $e_{L^\infty(\Omega)}$ & Order & $e_{L^2(\Omega)}$ & Order&
$e_{H^1(\Omega)}$ & Order & $e_w$ & Order \\
\hline
$2^{-1}$&4.84e-1& -    &3.74e-1&  -  &7.30e-1&-	&5.54e-1& -\\
$2^{-2}$&1.62e-1&1.58&3.32e-2&3.50&1.94e-1&1.02&3.02e-1&0.87\\
$2^{-3}$&4.91e-2&1.72&6.95e-3&2.25&1.07e-1&0.86&3.69e-1&-0.29\\
$2^{-4}$&8.24e-3&2.58&2.37e-3&1.55&2.04e-2&2.39&1.06e-1&1.81\\
$2^{-5}$&3.32e-3&1.31&7.40e-4&1.68&5.82e-3&1.80&2.93e-2&1.85\\
$2^{-6}$&9.07e-4&1.87&1.86e-4&1.99&1.54e-3&1.92&7.77e-3&1.91\\
\hline
\hline
 &  \multicolumn{8}{c|}{$C^0$-IP method}\\
\cline{2-9}
$h$ & $e_{L^\infty(\Omega)}$ & Order & $e_{L^2(\Omega)}$ & Order&
$e_{H^1(\Omega)}$ & Order & $e_h$ & Order \\
\hline
$2^{-1}$&3.46e-1& -   &2.07e-1&  -  &5.45e-1&-	&9.64e-1&-\\
$2^{-2}$&1.46e-1&1.25&8.57e-2&1.27&2.55e-1&1.09&6.15e-1&0.65\\
$2^{-3}$&1.41e-1&0.05&4.65e-2&0.88&1.67e-1&0.61&5.11e-1&0.27\\
$2^{-4}$&5.73e-2&1.30&1.78e-2&1.38&7.18e-2&1.22&2.68e-1&0.93\\
$2^{-5}$&2.16e-2&1.41&5.86e-3&1.60&2.81e-2&1.35&1.35e-1&0.99\\
$2^{-6}$&6.71e-3&1.68&1.70e-3&1.79&9.13e-3&1.62&6.40e-2&1.08\\
\hline
\end{tabular}
\caption{Comparison of errors between $\bar{y}$ and $\bar{y}_h$ in various norms for the $C^0$-WG and $C^0$-IP methods ($\rho=100$ for $C^0$-IP \cite{brenner2024c0}).}
\label{table: ocp_wL}
\end{table}

Figure~\ref{fig: optimum_wL} displays the numerical solutions of the optimal state and optimal control for mesh size $h=2^{-6}$.
The optimal state is computed by solving the discrete optimal control problem corresponding to~\eqref{eg: ocp} using $P_2$ Lagrange finite elements. The optimal control is then obtained by applying a discrete Laplacian operator, defined piecewise, as $\bar{u}_h=-\Delta_h \bar{y}_h$.
\begin{figure}[h!]
      \centering
      \begin{subfigure}[b]{0.4\textwidth}
          \centering
          \includegraphics[width=\textwidth]{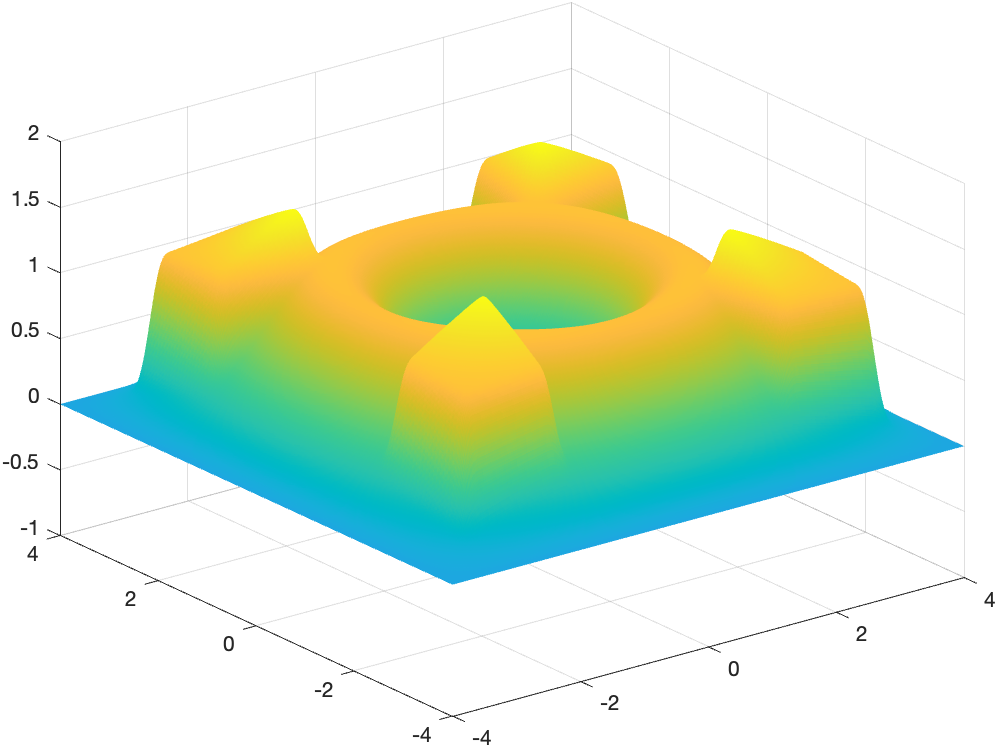}
          \caption{Optimal state}
      \end{subfigure}
      \hspace{10pt}
      \begin{subfigure}[b]{0.4\textwidth}
          \centering
          \includegraphics[width=\textwidth]{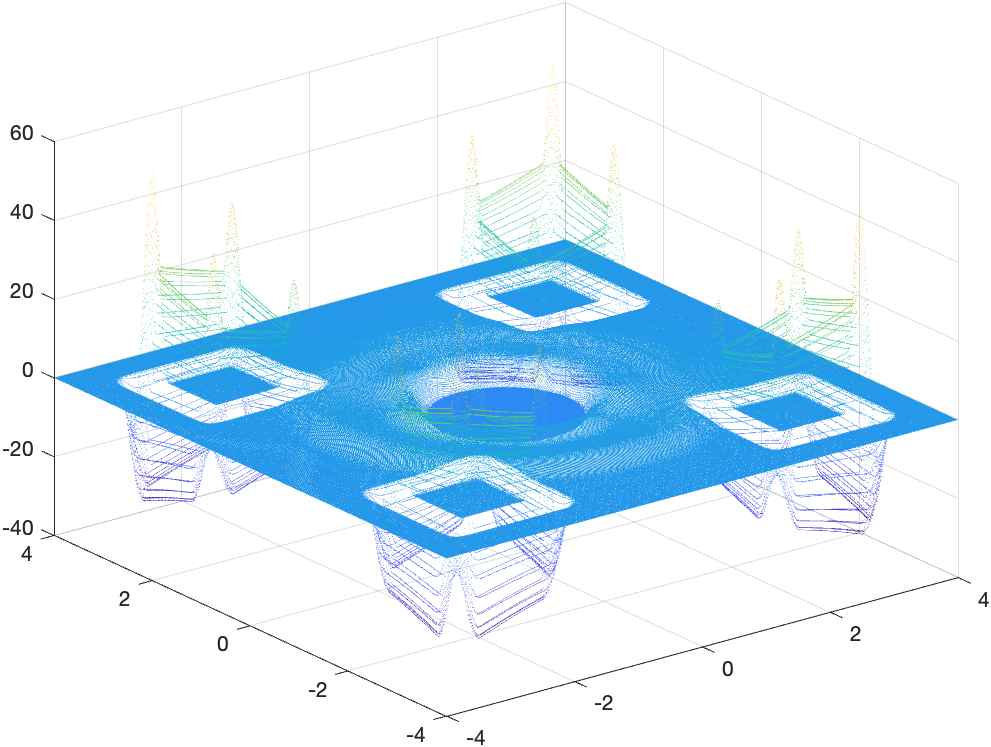}
          \caption{Optimal control}
      \end{subfigure}
      \caption{Quadratic state and piecewise-constant control approximation using the $C^0$-WG method.}
      \label{fig: optimum_wL}
\end{figure}


\subsection{Preconditioning}

We compare the condition numbers of the system matrix arising from the bilinear form~\eqref{eqn: bilinear_aw}, both with and without the additive Schwarz preconditioner (\texttt{ASP}) introduced in Section~\ref{sec: preconditioning}.
We first examine the condition numbers of the unpreconditioned system matrix, denoted by $\kappa(\mathsf{A}_h)$.
As shown in Table~\ref{table:cond_no_preconditioner}, the condition number grows at an order of $\mathcal{O}(h^{-4})$ as the mesh is refined.
The table also reports results for the preconditioned system using \texttt{ASP} with four subdomains placed at the corners of the domain.
With preconditioning, the condition number is reduced by more than two orders of magnitude compared to the unpreconditioned case.
\begin{table}[h!]
    \centering
    \footnotesize
    \begin{tabular}{|c||c|c||c|c||c|c|}
    \hline
        &  \multicolumn{2}{c||}{Without \texttt{ASP}
        } &  \multicolumn{2}{c||}{\texttt{ASP} ($\delta=h$)}
         & 
         \multicolumn{2}{c|}{\texttt{ASP} ($\delta=2h$)} \\
    \cline{2-7}   
       $h$  & $\kappa(\mathsf{A}_h)$ & {Order} & $\kappa(\mathsf{B}_h\mathsf{A}_h)$ & {Order} &
       $\kappa(\mathsf{B}_h\mathsf{A}_h)$ & {Order} \\ 
       \hline
       $2^{-2}$ & 5.61e+2 & - & 5.08e+0 & - &  - & - \\
       $2^{-3}$  & 9.77e+3 & 4.12 & 2.48e+1 & 2.29 & 9.62e+0 & -  \\
       $2^{-4}$   & 1.65e+5 & 4.08 & 1.40e+2 & 2.49 & 4.26e+1 & 2.15  \\
       $2^{-5}$  & 2.68e+6 & 4.02 & 9.87e+2 & 2.81 & 2.63e+2 & 2.63  \\
       $2^{-6}$  & 4.30e+7 & 3.91 & 7.52e+3 & 2.93 & 1.93e+3 & 2.88  \\
       \hline
    \end{tabular}
    \caption{Condition numbers of the linear system with and without \texttt{ASP}.}
    \label{table:cond_no_preconditioner}
\end{table}
Results are shown for two overlap widths, $\delta = h$ and $\delta = 2h$.
In both settings, the growth of the condition number is significantly mitigated, remaining below $\mathcal{O}(h^{-3})$, which indicates improved numerical stability.
Moreover, the condition number is consistently smaller for the larger overlap $\delta = 2h$, highlighting the effectiveness of increased overlap in domain decomposition methods.


\section{Conclusion}
\label{sec:conclusion}

In this paper, we developed a $C^0$ weak Galerkin ($C^0$-WG) method combined with an additive Schwarz preconditioner to efficiently and accurately solve optimal control problems governed by elliptic partial differential equations, featuring general tracking cost functionals and pointwise state constraints.
By eliminating the control variable via the PDE constraint, we reformulated the OCP as a fourth-order variational inequality that characterizes the optimal solution. 
These problems pose significant numerical challenges due to reduced regularity and the need to address ill-conditioned sparse systems arising from discretization.

Our first contribution introduced a weak Galerkin (WG) discretization based on globally continuous quadratic Lagrange elements, eliminating the need for penalty parameters and trace terms required in the symmetric $C^0$ interior penalty method. This WG formulation enables efficient stiffness matrix assembly and robust performance via a parameter-free stabilization.
Our second contribution addressed the ill-conditioning of the resulting linear systems by designing a one-level additive Schwarz preconditioner. This preconditioner significantly improves the condition number of the system, enabling efficient and scalable iterative solvers. Numerical experiments confirmed the accuracy and effectiveness of the proposed $C^0$-WG method and the associated preconditioner for both biharmonic problems and constrained OCPs.

Overall, this work presents a robust and efficient computational framework for high-order constrained OCPs and lays a foundation for future extensions to more complex problems, including time-dependent and nonlinear settings.

	\bibliographystyle{plain}
\bibliography{efficient_OCP}
	
\end{document}